\newenvironment{customthm}[1]
  {\innercustomthm}
  {\endinnercustomthm}
\newtheorem{theorem}{Theorem}[section]
\newtheorem*{theorem*}{Theorem}
\newtheorem{lemma}[theorem]{Lemma}
\newtheorem{corollary}[theorem]{Corollary}
\newtheorem{proposition}[theorem]{Proposition}
\newtheorem{claim}{Claim}[theorem]
\theoremstyle{definition}
\newtheorem{definition}[theorem]{Definition}
\newtheorem{example}[theorem]{Example}
\newtheorem{question}{Question}
\theoremstyle{remark}
\newtheorem{remark}[theorem]{Remark}
\newcommand{\R}{\mathbb{R}}
\newcommand{\C}{\mathbb{C}}
\begin{document}

\title[On smoothness, tangent cones, and the metric geometry of sets]
{On smoothness, tangent cones, and the metric geometry of definable sets}

\author[A. G. Rocha]{Andr\'e Gadelha Rocha}

\author[J. E. Sampaio]{Jos\'e Edson Sampaio}
\address{Andr\'e Gadelha Rocha and Jos\'e Edson Sampaio:  
              Departamento de Matem\'atica, Universidade Federal do Cear\'a,
	      Rua Campus do Pici, s/n, Bloco 914, Pici, 60440-900, 
	      Fortaleza-CE, Brazil. \newline   
              E-mail: {\tt edsonsampaio@mat.ufc.br}                    
}

\thanks{The third named author was partially supported by CNPq-Brazil grant 303375/2025-6. This work was supported by the Serrapilheira Institute (grant number Serra -- R-2110-39576). 
}
\keywords{Lipschitz geometry, analytic sets, regularity, smoothness, tangent cones}
\subjclass[2010]{14B05; 32S50}

\begin{abstract}
In this paper, we present several definitive characterizations of the $C^1$ smoothness of definable sets in terms of their tangent cones and some other metric properties. In particular, we recover some of the beautiful characterizations presented by Ghomi and Howard (2014) and by Kurdyka, Le Gal, and Nhan (2018). For instance, we prove that for any $X\subset \R^n$ that is a locally closed $d$-dimensional definable set in an o-minimal structure, the following items are equivalent:
\begin{enumerate}
 \item $X$ is Lipschitz normally embedded (LNE), $C_3(X,p)$ is a $d$-dimensional linear subspace for any $p\in X$ and depends continuously on $p$;
 \item For each $p\in X$, $X$ is Lipschitz regular at $p$ and $C_4(X,p)$ is a $d$-dimensional linear subspace;
 \item $X$ is a topological manifold and for each $p\in X$, $X$ is LNE at $p$ and $C_4(X,p)=C_3(X,p)$;
 \item $X$ is a topological manifold, and $C_5(X,p)$ is a $d-$dimensional subset for any $p\in X$;
 \item $X$ is $C^1$ smooth.
\end{enumerate}
\end{abstract}

\maketitle
\tableofcontents

\section{Introduction}

A fundamental problem in singularity theory is to know how simple the topology of complex analytic sets is. For example, {\it does topological regularity imply analytic regularity?} In general, this does not occur, but Mumford in \cite{Mumford:1961} showed a result in this direction, which can be formulated as follows: \emph{a topologically regular complex surface in $\C^3$,  with isolated singularities, is smooth}.

The second author, in his thesis \cite{Sampaio:2015} (see also \cite{Sampaio:2016} and \cite{BirbrairFLS:2016}), proved a version of Mumford's Theorem. He showed that if a complex analytic set is Lipschitz regular at $p$, then it is smooth at $p$. A definitive metric characterization of the smoothness of complex analytic sets was established in \cite{Sampaio:2025a}, showing that {\it a complex analytic set is smooth at $p$ if and only if it is $\alpha$-H\"older regular at $p$ for all $\alpha\in (0,1)$}. Recall that a subset $X\subset\R^{n}$ is called {\bf Lipschitz regular at $p\in X$} if there is an open neighborhood $U$ of $p$ in $X$ that is bi-Lipschitz homeomorphic to a Euclidean ball. The notion of $\alpha$-H\"older regularity is defined similarly.

In the real setting, for each positive integer $d$, we have the following question: {\it Let $X\subset \mathbb R^n$ be a $d$-dimensional real analytic set that is Lipschitz regular at $0\in X$. Is it true that $X$ is $C^1$ smooth at $0$?}

In the paper \cite{Sampaio:2021}, it was proved that the above question has a positive answer if and only if $d=1$. In particular, 
it was proved that for $d>1$, $X=\{(x_1,...,x_{d+1})\in \R^{d+1};\, x_3^3=x_1^3+x_2^3\}$ is bi-Lipschitz homeomorphic to $\R^d$ but fails to be $C^1$ smooth.

Thus, in order to obtain $C^1$ smoothness of real analytic sets, we must require more than Lipschitz regularity. 

In this article, we address the problem of characterizing $C^1$ submanifolds in the context of o-minimal structures in terms of their tangent cones, which are defined in Definition \ref{def:tg_cones}. To know more about o-minimal geometry, see, for instance, \cite{Coste:1999} and \cite{Dries:1998}

Characterizing $C^1$ submanifolds of $\R^n$ in terms of their third and fifth tangent cones has already been studied by many authors,
see, for example, \cite{BigolinG:2012,GhomiH:2014,Gluck:1968,KurdykaGN:2018,Tierno:2000}, and the survey \cite{BigolinG:2000}.  
For instance, it was proved in \cite[Theorem 4.7]{KurdykaGN:2018} the following very interesting characterization:
\begin{theorem}\label{thm:KGN}
Let $X\subset \R^n$ be a locally closed definable set in an o-minimal structure. Then, $X$ is $C^1$ smooth if and only if $C_5(X,p)=C_3(X,p)$ for any $p\in X$.
\end{theorem}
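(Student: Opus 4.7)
The forward implication ``$X$ is $C^1$ $\Rightarrow$ $C_5(X,p)=C_3(X,p)$'' is the easy one: given a $C^1$ parametrization $\varphi\colon U\to X$ at $p$, writing $x_i=\varphi(u_i)$ and $y_i=\varphi(v_i)$ and invoking the mean value theorem componentwise gives $x_i-y_i=\bigl(\int_0^1 d\varphi(v_i+s(u_i-v_i))\,ds\bigr)(u_i-v_i)$; since the integrand tends to $d\varphi(0)$, any limit of $(x_i-y_i)/t_i$ must lie in $\mathrm{Im}\,d\varphi(0)=T_pX=C_3(X,p)$. So I would focus on the converse, and develop it in three stages: linear-subspace structure of $C_3$, continuous dependence on the basepoint, and finally $C^1$ regularity.

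For the first stage, symmetry of $C_5(X,p)$ (swap the roles of $x_i$ and $y_i$) combined with the hypothesis gives $C_3(X,p)=-C_3(X,p)$. For closure under subtraction I would use definable curve selection: given $u,v\in C_3(X,p)\setminus\{0\}$, choose definable arcs $\alpha,\beta\colon[0,\varepsilon)\to X$ with $\alpha(0)=\beta(0)=p$ tangent to the half-lines through $u$ and $v$. Reparametrizing each by its distance to $p$ (a definable, eventually monotone function) and then rescaling, one can arrange $\alpha(t)=p+tu+o(t)$ and $\beta(t)=p+tv+o(t)$. Setting $x_i=\alpha(t_i)$ and $y_i=\beta(t_i)$ for a common sequence $t_i\to 0^+$ yields $(x_i-y_i)/t_i\to u-v$, hence $u-v\in C_5(X,p)=C_3(X,p)$. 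Symmetry together with closure under subtraction and positive dilation makes $C_3(X,p)$ a linear subspace, of dimension equal to the local dimension of $X$ at $p$ by the standard tangent-cone fact for definable sets.

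For the second stage I would show that $p\mapsto C_3(X,p)$ is continuous into the Grassmannian by a diagonal argument. If $p_i\to p$ in $X$ and $v_i\in C_3(X,p_i)$ with $v_i\to v$, choose $x_i\in X$ and $t_i>0$ with $|x_i-p_i|<1/i$, $t_i<1/i$, and $|(x_i-p_i)/t_i-v_i|<1/i$. Then both $x_i$ and $p_i$ lie in $X$ and converge to $p$, while $(x_i-p_i)/t_i\to v$, so $v\in C_5(X,p)=C_3(X,p)$. This upper semicontinuity, combined with the constant dimension established in the first stage, forces full continuity in the Grassmannian. The third stage is then to fix $p\in X$, set $V=C_3(X,p)$, and use the standard definable estimate $\mathrm{dist}(x-p,V)=o(|x-p|)$ together with continuity of $q\mapsto C_3(X,q)$ to show that orthogonal projection onto $p+V$ is a local homeomorphism from a neighborhood of $p$ in $X$ onto an open set in $p+V$. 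This exhibits $X$ locally as the graph of a function $f$ whose derivative at any point of its domain is forced to have graph equal to the corresponding $C_3(X,q)$; continuity of the tangent distribution is exactly continuity of $df$, so $f$ is $C^1$.

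The main obstacle I expect is this last step, extracting a genuine graph structure with a single-valued $f$ and injective projection from the pointwise tangency. The linear-subspace condition at each point and the continuous tangent distribution together essentially provide a candidate chart, but one must combine the uniform tangency available in the definable category with the continuity of the tangent subspaces from Stage~2 to rule out pathologies such as non-uniform $o(|x-p|)$ rates or the projection folding back on itself; the earlier stages, by contrast, are essentially definable curve selection plus a standard semicontinuity argument.
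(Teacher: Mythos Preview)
Your outline is correct and in fact more streamlined than the paper's route. The paper derives this result as Corollary~\ref{cor:c3_c5} by threading through its entire hierarchy: $C_3=C_5$ forces $C_5(X,p)$ to be a $d$-dimensional linear subspace, Proposition~\ref{prop:dim_c5} then produces a local Lipschitz graph, Claim~\ref{claim:open_map} gives openness of the projection, Theorem~\ref{thm:dim_c5} upgrades this to Lipschitz regularity with $C_4(X,p)$ linear, and Theorem~\ref{thm:main_thm} (via bi-Lipschitz invariance of tangent cones and the $C_4$ cone) finally reduces to Theorem~\ref{thm:main_thm_one}. Your Stages~1 and~2 bypass the $C_4$ and Lipschitz-regularity machinery entirely and arrive directly at the hypotheses of Theorem~\ref{thm:main_thm_one}; your Stage~3 is then exactly the content of Claims~\ref{claim:open_map}--\ref{claim:graph_c_one}. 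What the paper's longer route buys is the intermediate characterizations that are its real object of study; what your route buys is a self-contained proof of the Kurdyka--Le Gal--Nguyen theorem without those auxiliary notions.

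One point worth flagging in your Stage~3: in the paper's general framework (Theorem~\ref{thm:main_thm_one}) the injectivity of the projection (Claim~\ref{claim:injective_map}) genuinely needs the LNE hypothesis, which you have not established. Under your stronger assumption $C_3=C_5$, however, injectivity comes for free and you should say so: if $x_j\neq y_j$ in $X$ both converge to $p$ with $\pi(x_j)=\pi(y_j)$, any subsequential limit of $(x_j-y_j)/\lVert x_j-y_j\rVert$ lies in $\ker\pi\cap C_5(X,p)=\ker\pi\cap C_3(X,p)=\{0\}$, a contradiction. This is precisely Proposition~\ref{prop:dim_c5}, and invoking it closes the gap you anticipated about the projection ``folding back on itself.''
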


Another interesting result is the following characterization, for sets of codimension one, which follows directly from the results proved in \cite{GhomiH:2014}:
\begin{theorem}\label{thm:GhomiH}
    Let $X\subset \R^n$ be an $(n-1)$-dimensional definable set in an o-minimal structure. Then $X$ is $C^1$ smooth if and only if $X$ is a topological manifold and $C_3(X,p)$ is an $(n-1)$-dimensional hyperplane for each $p\in X$ that depends continuously on $p$.
\end{theorem}

For complex analytic sets, the authors in \cite{BirbrairFLS:2016} presented a characterization of smoothness in terms of the third tangent cone and under the condition that the set is LNE. They proved that {\it a pure dimensional complex analytic set $X\subset \C^n$ is smooth around $p$ if and only if $X$ is LNE at $p$ and $C_3(X,p)$ is a linear subspace}. Recall that a set $X\subset \R^m$ is {\bf Lipschitz normally embedded} ({\bf LNE}) {\bf at} $p\in X$ if there are a neighborhood $U\subset \R^m$ of $p$ and a constant $C\geq 1$ such that $d_{X\cap U}(x,y)\leq C\|x-y\|$ for all $x,y\in X\cap U$, where $d_{X\cap U}$ denotes the geodesic distance on $X\cap U$.

In this paper, we generalise Theorems \ref{thm:KGN} and \ref{thm:GhomiH} and present results as the characterisation of \cite{BirbrairFLS:2016} in the much more general context of sets that are locally definable in o-minimal structures. More precisely, we prove the following result.
\begin{customthm}{\ref*{thm:main_thm_complete}}
Let $X\subset \R^n$ be a $d$-dimensional definable set in an o-minimal structure. Then the following items are equivalent.
\begin{enumerate}
 \item  $X$ is locally closed and LNE at any $p\in X$, and $C_3(X,p)$ is a $d$-dimensional linear subspace for any $p\in X$ and depends continuously on $p$;
 \item For each $p\in X$, $X$ is Lipschitz regular at $p$ and $C_4(X,p)$ is a $d$-dimensional linear subspace;
 \item $X$ is a topological manifold and for each $p\in X$, $X$ is LNE at $p$ and $C_4(X,p)=C_3(X,p)$;
 \item $X$ is a topological manifold and $C_5(X,p)$ is a $d$-dimensional linear subspace set for any $p\in X$;
 \item $X$ is $C^1$ smooth.
\end{enumerate}
\end{customthm}

We prove Theorem \ref{thm:main_thm_complete} in Section \ref{sec:reg_definable}. This theorem follows from Theorems \ref{thm:main_thm_one}, \ref{thm:main_thm}, \ref{thm:c3=c4_regularity}, and \ref{thm:dim_c5_general}.

As a consequence, we obtain \cite[Theorem 4.7]{KurdykaGN:2018}, which was stated above as Theorem \ref{thm:KGN} (see Corollary \ref{cor:c3_c5}).

The generalization of Theorem \ref{thm:GhomiH} is presented in Corollary \ref{cor:gen_thm:GhomiH}.

When we deal with real analytic sets, we obtain the following stronger result.

\begin{customthm}{\ref*{thm:reg_analytic_sets}}
Let $X\subset \R^n$ be a $d$-dimensional real analytic set. Then the following items are equivalent:
\begin{enumerate}
    \item $X$ is LNE at any $p\in X$ and $C_4(X,p)$ is a $d$-dimensional linear subspace for all $p\in X$;
    \item $C_5(X,p)$ is a $d$-dimensional linear subspace for all $p\in X$;
    \item $X$ is $C^1$ smooth.
\end{enumerate}
\end{customthm}

In the special case that $X$ is the zero set of a harmonic function $f\colon U\subset \R^2\to \R$, it is easy to prove that $X$ is smooth if and only if it is a topological manifold. So, a natural question regarding the smoothness of a set that is the zero set of a harmonic function is the following. 

\begin{question}\label{question:harmonic_sets}
Let $X$ be a set that is the zero set of a harmonic function $f\colon U\subset \R^n\to \R$. If $X$ is Lipschitz regular at $p\in X$, is $X$ is $C^1$ smooth at $p$?
\end{question}
Unfortunately, this question has a negative answer (see Example \ref{exam:hamornic_set_nonsmooth}).

In the parametrised case, Nu\~no-Ballesteros and Mendes in \cite{Nuno-BallesterosM:2020} proved that {\it for a germ of a real analytic mapping $F\colon (\R^n,0)\to (\R^k,0)$ that is a lipeomorphism, its image $X=Im(F)$ is smooth in $0$}.

So, we have the following natural question:

\begin{question}\label{question:parametrized}
Let $F\colon (\R^n,0)\to (\R^k,0)$ be the germ of a real analytic mapping and $X=Im(F)$. If $X$ is Lipschitz regular at $0$,  is $X$ $C^1$ smooth at $0$?
\end{question}

Unfortunately, this question has a negative answer, as given in the following example.

\begin{example}
Let $F\colon \R^2\to \R^3$ be the mapping given by $F(x,y)=(x,y^3,(x+y)^3)$. Then $X=Im(F)$ is Lipschitz regular at $0$, but $X$ is not smooth at $0$.
\end{example}

The details explaining why $X=Im(F)$ is Lipschitz regular at $0$ and is  not smooth at $0$ are presented in Proposition \ref{prop:parametrized_fails}.

Another result that gives a beautiful characterization of smoothness of complex analytic sets is the renowned Theorem of Nobile, proved by Nobile in 1975, which states that {\it a pure dimensional complex analytic set $X$ is analytically smooth if and only if its Nash mapping $\eta: \mathcal{N}(X) \to X$ is an analytic isomorphism}. See the definition of the Nash mapping in Subsection \ref{subsec:nash_map}. Recently, the second author in \cite{Sampaio:2025b} proved the real version of the Theorem of Nobile. Indeed, he proved a much more general result that holds in the setting of definable sets in an o-minimal structure, as stated below.

\begin{theorem}\label{thm:c11_smooth}
Let $X\subset \R^n$ be a locally closed set that is pure $d$-dimensional and locally definable in an o-minimal structure on $\R$. Then, for a fixed nonnegative integer number $k$, $X$ is $C^{k+1,1}$ smooth if and only if the mapping $\eta\colon\mathcal{N}(X)\to X$ is a homeomorphism such that $\eta^{-1}$ is $C^{k,1}$ smooth and $C_3(X,p)=C_4(X,p)$ for any $p\in X$.  Moreover, $X$ is $C^{\infty}$ smooth if and only if the mapping $\eta\colon\mathcal{N}(X)\to X$ is a $C^{\infty}$ diffeomorphism and $C_3(X,p)=C_4(X,p)$ for any $p\in X$. 
\end{theorem}

In fact, Theorem \ref{thm:c11_smooth} holds true even for log-Lipschitz regularity (see \cite[Theorem 3.5]{Sampaio:2025b}); however, surprisingly (and in contrast to the paper \cite{Sampaio:2025a}), an example was presented in \cite{Sampaio:2025b} (see \cite[Example 3.8]{Sampaio:2025b}) that shows Theorem \ref{thm:c11_smooth} does not hold true when we only require that $\eta$ is a homeomorphism such that $\eta^{-1}$ is $C^{0,\alpha}$ smooth for all $\alpha\in (0,1)$. 
Here, we show that such a version of Theorem  \ref{thm:c11_smooth} holds true if we add the extra condition that the set $X$ is LNE at each $p\in X$.

\bigskip

\noindent{\bf Acknowledgements}. We would like to thank Guillermo Sanchis Pe\~{n}afort for helping us find the example of Proposition \ref{prop:parametrized_fails}. We would also like to thank Alexandre Fernandes, Euripedes C. da Silva, Juan Jos\'e Nu\~{n}o Ballesteros, and Lev Birbrair for their interest in this research.

\section{Preliminaries}\label{sec:preliminaries}

\subsection{O-minimal structures}
This subsection is closely related to the section with the same title in \cite{Sampaio:2025b}.

\begin{definition}\label{definivel}
A structure on $\R$ is a collection $\mathcal{S}=\{\mathcal{S}_n\}_{n\in \mathbb{Z}_{>0}}$ where each $\mathcal{S}_n$ is a set of subsets of $\R^n$, satisfying the following axioms:
\begin{itemize}
\item [1)] All algebraic subsets of $\R^n$ are in $\mathcal{S}_n$;
\item [2)] For every $n$, $\mathcal{S}_n$ is a Boolean subalgebra of the powerset of $\R^n$;
\item [3)] If $A\in \mathcal{S}_m$ and $B \in S_n$, then $A \times B \in \mathcal{S}_{m+n}$.
\item [4)] If $\pi \colon \R^{n+1} \to \R^n$ is the projection on the first $n$ coordinates and $A\in \mathcal{S}_{n+1}$, then $\pi(A)\in \mathcal{S}_n$.
\end{itemize}
An element of $\mathcal{S}_n$ is called {\bf definable in $\mathcal{S}$}.
The structure $\mathcal{S}$ is said {\bf o-minimal} if it satisfies the following condition:
\begin{itemize}
\item [5)] The elements of $\mathcal{S}_1$ are precisely finite unions of points and intervals.
\end{itemize}
\end{definition}

\begin{definition}
A mapping $f\colon A\subset \R^n \to \R^m$ is called {\bf definable in $\mathcal{S}$} if its
graph is an element of $\mathcal{S}_{n+m}$. 
\end{definition}

We say that a set $X\subset \R^n$ is {\bf locally definable in $\mathcal{S}$} if for any $x\in \overline{X}$ there is a neighbourhood $U\subset \R^n$ of $x$ such that $X\cap U$ is definable in $\mathcal{S}$.

Throughout this paper, we fix an o-minimal structure $\mathcal{S}$ on $\R$.

In the sequel, the adjective {\bf definable} denotes definable in $\mathcal{S}$.

We say that a set $X\subset \R^n$ is {\bf locally definable} if for any $x\in \overline{X}$ there is a neighbourhood $U\subset \R^n$ of $x$ such that $X\cap U$ is definable.

\subsection{Smooth mappings on sets}

In this subsection, we recall the definitions of smoothness of mappings defined only in subsets of manifolds. This subsection is closely related to the section with the same title in \cite{Sampaio:2025b}.

\begin{definition}\label{defi:regulaties_one}
Let $(M_1,d_1)$ and $(M_2,d_2)$ be two metric spaces. We say that a map $f\colon M_1\to M_2$ is:
\begin{itemize}
    \item {\bf $\alpha$-H\"older} if there is a constant $C>0$ such that 
    $$
    d_2(f(x),f(y))\leq Cd_1(x,y)^{\alpha} 
    $$
    for all $x,y\in M_1$. In this case, we also say that $f$ is {\bf $C^{0,\alpha}$ smooth}. When $f$ is 1-H\"older, we also say that $f$ is {\bf Lipschitz};
    \item  a {\bf bi-Lipschitz homeomorphism} if $f$ is a bijection such that $f$ and $f^{-1}$ are Lipschitz;
    \item {\bf $\gamma$-log-Lipschitz} if there is a constant $C>0$ such that
    $$
    d_2(f(x),f(y))\leq Cd_1(x,y)|\log (d_1(x,y))|^{\gamma} 
    $$
    for all $x,y\in M_1$ such that $d_1(x,y)\leq 1/2$.
    In this case, we also say that $f$ is {\bf $C^{0,\gamma-log-Lip}$ smooth}. When $f$ is $1$-log-Lipschitz, we simply say that $f$ is {\bf log-Lipschitz} or $f$ is {\bf $C^{0,log-Lip}$ smooth}.
\end{itemize}
When $M_1$ and $M_2$ are two $C^{\infty}$ smooth manifolds, $k\geq 0$ and $\beta\in [0,1]\cup \{\gamma-log-Lip;$ $\gamma\geq 0\}$, we say that a map $f\colon M_1\to M_2$ is $C^{k+1,\beta}$ smooth if its derivative $Df\colon TM_1\to TM_2$ is $C^{k,\beta}$ smooth.
\end{definition}

\begin{definition}\label{defi:regulaties_two}
Let $M_1$ and $M_2$ be two $C^{\infty}$ smooth manifolds. Let $f\colon A_1\to A_2$ be a mapping, where $A_1$ and $A_2$ are subsets of $M_1$ and $M_2$, respectively. We say that $f$ is $C^{k,\alpha}$ (resp. $C^{\infty}$) smooth if for each $p\in A_1$, there are an open neighbourhood $U\subset M_1$ of $p$ and a $C^{k,\alpha}$ (resp. $C^{\infty}$) smooth mapping $F\colon U\to M_2$ such that $F|_{A_1\cap U}=f|_{A_1\cap U}$. We say that $f$ is a $C^{k,\alpha}$ (resp. $C^{\infty}$) diffeomorphism if $f$ is a bijection and, moreover, $f$ and $f^{-1}$ are $C^{k,\alpha}$ (resp. $C^{\infty}$) smooth.
\end{definition}
\begin{definition}\label{defi:regulaties_three}
Let $M_1$ and $M_2$ be two real (resp. complex) analytic manifolds. Let $f\colon A_1\to A_2$ be a mapping, where $A_1$ and $A_2$ are subsets of $M_1$ and $M_2$, respectively. We say that $f$ is real (resp. complex) analytic smooth if for each $p\in A_1$, there are an open neighbourhood $U\subset M_1$ of $p$ and a real (resp. complex) analytic smooth mapping $F\colon U\to M_2$ such that $F|_{A_1\cap U}=f|_{A_1\cap U}$. We say that $f$ is a real (resp. complex) analytic diffeomorphism if $f$ is a bijection and, moreover, $f$ and $f^{-1}$ are real (resp. complex) analytic smooth.
\end{definition}

\subsection{Tangent cones}

\begin{definition}\label{def:tg_cones}
Let $X\subset \R^{n}$ be a set such that $p\in \overline{X}$. We define $C_i(X,p)$, $i=3,4 $, and $5$ as follows:
\begin{enumerate}
 \item $v\in C_3(X,p)$ if there are a sequence of points $\{x_i\}_i\subset X$ tending to $p$ and a sequence of positive real numbers $\{t_i\}_i$ such that 
$$\lim\limits_{i\to \infty} \frac{1}{t_i}(x_i-p)= v.$$
 \item $v\in C_4(X,p)$ if there are a sequence of points $\{x_i\}\subset {\rm Reg}_1X$ tending to $p$ and a sequence of $\{v_i\}_i\subset  \R^{n}$ tending to $v$ such that $v_i\in T_{x_i}X$ for all $i$. Here, ${\rm Reg}_1X$ denotes the set of points $p\in X$ where $X$ is $C^1$ smooth at $p$ (i.e., $X$ is a $C^1$ submanifold around $p$). 
 \item $v\in C_5(X,p)$ if there are sequences of points $\{x_i\}_i, \{y_i\}_i\subset X$ tending to $p$ and a sequence of positive real numbers $\{t_i\}_i$ such that 
$$\lim\limits_{i\to \infty} \frac{1}{t_i}(x_i-y_i)= v.$$
\end{enumerate}
\end{definition}

\begin{lemma}\label{inclusao_cones}
Let $X\subset \R^{n}$ be a definable set such that $p\in \overline{X}$. Then $C_3(X,p)\subset C_4(X,p)\subset C_5(X,p)$.
\end{lemma}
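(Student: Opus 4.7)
The plan is to prove the two inclusions $C_3(X,p)\subset C_4(X,p)$ and $C_4(X,p)\subset C_5(X,p)$ separately. Both use two standard consequences of o-minimality: $\mathrm{Reg}_1 X$ is open and dense in $X$, and every definable curve is piecewise $C^1$ (monotonicity theorem together with $C^1$ cell decomposition).

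For the first inclusion, let $v\in C_3(X,p)$. If $v=0$, it is immediate (any sequence $x_i\to p$ in $\mathrm{Reg}_1 X$ with $v_i=0\in T_{x_i}X$ works). If $v\neq 0$, the density of $\mathrm{Reg}_1 X$ in $X$ allows me to perturb the sequence witnessing $v\in C_3(X,p)$ within $o(t_i)$-error and conclude that already $v\in C_3(\mathrm{Reg}_1 X,p)$. The Curve Selection Lemma applied to
\[
\{x\in\mathrm{Reg}_1 X :\ 0<\|x-p\|<\epsilon,\ \|(x-p)/\|x-p\|-v/\|v\|\|<\epsilon\}
\]
(via a spherical blow-up, or a diagonal choice as $\epsilon\to 0$) then produces a definable continuous curve $\alpha\colon[0,\eta)\to\overline{X}$ with $\alpha(0)=p$, $\alpha\bigl((0,\eta)\bigr)\subset\mathrm{Reg}_1 X$, and $(\alpha(s)-p)/\|\alpha(s)-p\|\to v/\|v\|$. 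By the monotonicity theorem I may assume, after shrinking $\eta$, that $\alpha$ is $C^1$ on $(0,\eta)$. Reparametrizing by arclength and applying the o-minimal l'H\^opital rule component-wise to $\alpha(s)-p$ against $s$ gives $\alpha'(s)\to v/\|v\|$. Since $\alpha(s)\in\mathrm{Reg}_1 X$, we have $\alpha'(s)\in T_{\alpha(s)}X$, so taking $s_i\to 0^+$ and setting $(x_i,v_i):=(\alpha(s_i),\,\|v\|\cdot\alpha'(s_i))$ witnesses $v\in C_4(X,p)$.

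For the second inclusion, fix $v\in C_4(X,p)$ and take $x_i\in\mathrm{Reg}_1 X$ with $x_i\to p$ and $v_i\to v$ with $v_i\in T_{x_i}X$. Because $X$ is locally a $C^1$ submanifold around each $x_i$, there is a $C^1$ curve $\beta_i\colon(-\delta_i,\delta_i)\to X$ with $\beta_i(0)=x_i$ and $\beta_i'(0)=v_i$, so $\beta_i(s)=x_i+sv_i+o(s)$ as $s\to 0$. I choose $s_i\in(0,\delta_i)$ small enough that both $\|s_i^{-1}(\beta_i(s_i)-x_i)-v_i\|<1/i$ and $s_i(\|v_i\|+1)<1/i$, and set $y_i:=\beta_i(s_i)$, $t_i:=s_i$. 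Then $x_i,y_i\in X$, both converge to $p$ (the second by the triangle inequality $\|y_i-p\|\le\|y_i-x_i\|+\|x_i-p\|$), and $(y_i-x_i)/t_i\to v$, so $v\in C_5(X,p)$.

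The main obstacle lies in the first inclusion: realizing the abstract secant direction $v\in C_3(X,p)$ by a definable $C^1$ curve that remains inside $\mathrm{Reg}_1 X$ and whose derivative actually converges (to $v$, up to rescaling). This is where the full o-minimal toolkit — Curve Selection, monotonicity, $C^1$ cell decomposition, and l'H\^opital — is essential; the second inclusion, by contrast, is a purely local manifold-theoretic discretization of tangent vectors into secants.
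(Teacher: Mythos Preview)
Your proof is correct and follows essentially the same strategy as the paper: for $C_3\subset C_4$ you both reduce to $C_3(\mathrm{Reg}_1 X,p)$, realize $v$ via a definable $C^1$ curve in $\mathrm{Reg}_1 X$, and read off $\alpha'(t)\in T_{\alpha(t)}X$ converging to $v$; for $C_4\subset C_5$ you both produce short curves through $x_i$ tangent to $v_i$ and pass to secants. The only cosmetic differences are that you invoke l'H\^opital where the paper simply asserts $C^1$-continuity of $\alpha'$ at the endpoint, and you use a one-sided secant $(\beta_i(s_i)-x_i)/s_i$ where the paper uses a symmetric difference quotient---both are equally valid and your version is arguably tidier.
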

\begin{proof}
First observe that $C_3(X,p)=C_3(Reg_1(X),p)$.
Now given $v \in C_3(X,p)=C_3(Reg_1(X),p)$ we have that there is a curve $\alpha:[0,\epsilon) \rightarrow Reg_1(X)$ such that, among other things, $\alpha'(0)=v$, but since $\alpha(t) \in Reg_1(X)$, we have that $\alpha'(t) \in T_{\alpha(t)}X$. As $\alpha \in C^1$, then $\lim_{t \to 0^+}\alpha'(t)=\alpha'(0)=v$. Take $v \in C_4(X,p)$, since $v_k \in T_{x_k}X$, then there is a curve $\alpha:(-\epsilon,\epsilon) \to X$ with $\alpha (0)=x_k$, such that $v_k=\alpha'(0)$. But this means that there are sequences $(y_{k_i})_i$ and $(z_{k_i})_i$ in $(-\epsilon,\epsilon)$ with $(y_{k_i})_i \le 0 \le (z_{k_i})_i$ and $\lim_{i \to \infty}(y_{k_i})_i=0=\lim_{i \to \infty}(z_{k_i})_i$, so that
$$v_k=\alpha'(0)=\lim_{i \to \infty} \frac{\alpha((z_{k_i})_i)-\alpha((y_{k_i})_i)}{(z_{k_i })_i-(y_{k_i})_i},$$
then for all $v_k$, fixed $i\gg 1$, we obtain $a_k=(z_{k_i})_i$ and $b_k=(y_{k_i})_i$ so that
$$\left\|v_k-\frac{\alpha(a_k)-\alpha(b_k)}{a_k-b_k}\right\|<\frac{1}{2i},$$
then $v \in C_5(X,p)$.   
\end{proof}

\begin{lemma}\label{caracterizacao do C4}
Let $X\subset \R^{n}$ be a pure $d$-dimensional definable set such that $p\in \overline{X}$. Then $C_4(X,p)$ is the union of all $d$-dimensional linear subspaces $T$ such that there is a sequence of points $\{x_i\}_i\subset {\rm Reg}_1X$ tending to $p$ with $\lim\limits_{i\to \infty} T_{x_i}X=T$.
\end{lemma}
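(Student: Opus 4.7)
The plan is to prove both inclusions separately, relying on compactness of the Grassmannian $G(d,n)$ of $d$-dimensional linear subspaces of $\R^n$ together with the standard fact that if $W_i \to W$ in $G(d,n)$ and $w_i \in W_i$ satisfies $w_i \to w$, then $w \in W$. The quickest way to see this standard fact is to observe that convergence in $G(d,n)$ is equivalent to operator-norm convergence of the associated orthogonal projections $\Pi_{W_i} \to \Pi_W$, so that $w = \lim w_i = \lim \Pi_{W_i}(w_i) = \Pi_W(w)$.

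First I would dispatch the easy inclusion: if $T$ is a $d$-dimensional subspace arising as a limit $T = \lim_{i\to\infty} T_{x_i}X$ for some sequence $x_i \in \mathrm{Reg}_1 X$ with $x_i \to p$, and $v \in T$, then one defines $v_i := \Pi_{T_{x_i}X}(v)$. By construction $v_i \in T_{x_i}X$, and operator-norm convergence of projections gives $v_i \to \Pi_T(v) = v$. This exhibits $v$ as an element of $C_4(X,p)$.

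For the reverse inclusion, I start with $v \in C_4(X,p)$ and its witnessing sequences $x_i \in \mathrm{Reg}_1 X$, $x_i \to p$, and $v_i \in T_{x_i} X$, $v_i \to v$. The purity of dimension hypothesis, combined with the definition of $\mathrm{Reg}_1 X$, ensures that each $T_{x_i}X$ is a $d$-dimensional subspace (any $C^1$-submanifold chart around a regular point must have the same dimension as the ambient pure-dimensional set). Hence $(T_{x_i}X)_i$ is a sequence in the compact space $G(d,n)$, and after passing to a subsequence we may assume $T_{x_i}X \to T$ for some $d$-dimensional linear subspace $T$. Applying the Grassmannian fact from the first paragraph to $v_i \in T_{x_i}X$ then yields $v \in T$, which puts $v$ in the desired union.

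No step presents a serious obstacle; the only mild subtlety is justifying $\dim T_{x_i}X = d$, which is where the pure-dimensionality hypothesis enters, and verifying the Grassmannian convergence lemma, which is standard. Definability plays no essential role in the argument beyond guaranteeing that $\mathrm{Reg}_1 X$ is a well-behaved subset of $X$.
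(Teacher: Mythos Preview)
Your proposal is correct and follows essentially the same two-inclusion strategy as the paper: for the easy inclusion both of you approximate a vector in the limit plane $T$ by vectors in the converging $T_{x_i}X$ (you via orthogonal projections, the paper via basis vectors and the Grassmannian distance), and for the reverse inclusion both of you pass to a subsequence using compactness of $G(d,n)$ and then argue that the limit vector must lie in the limit subspace. Your write-up is in fact cleaner, since invoking operator-norm convergence of the projections $\Pi_{T_{x_i}X}\to\Pi_T$ handles both steps uniformly, whereas the paper's version works coordinate-wise through a chosen basis; but the mathematical content is the same.
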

\begin{proof}
Let $T$ be a $d$-dimensional linear subspaces such that there is a sequence of points $\{x_j\}_j\subset {\rm Reg}_1X$ tending to $p$ with $\lim\limits_{j\to \infty} T_{x_j}X=T$. We may assume that $\delta(T_{x_j}X,T)<\frac{1}{j}$, where $\delta$ is the usual Grassmannian metric. Let $\{w_1, w_2, \cdots, w_d\}$ be a basis of $T$, which we may assume satisfies $\|w_i\| = 1$ for $1 \le i \le k$.
Since $\delta(T_{x_j}X, T) < \frac{1}{j}$, there exists $v_{i_j} \in T_{x_j}X$ such that $\|v_{i_j} - w_i\| < \frac{1}{i}$. Thus, given $w \in T$, we have $w = a_1w_1 + \cdots + a_dw_d$ for constants $a_1, \cdots, a_d \in \mathbb{R}$, and therefore the sequence of vectors $w_j = a_1v_{1_j} + \cdots + a_nv_{k_j} \in T_{x_j}X$ converges to $w$. Thus, $w\in C_4(X,p)$.

Now take a vector $v \in C_4(X,p)$, then there is a sequence $v_i \to v$, as in the definition, we have that the sequence $(T_{x_i}X)$ converges to a d-dimensional subspace of $\mathbb{R}^n$, say $S$. So short of normalizing the vectors of $(v_i)_i$ and $v$, we have to
$$\frac{1}{i}>\delta(S,T_{x_i}X)\ge sup_{\|w\|=1,w \in T_{x_i}X}\{\|w-\pi_S(w)\|\} \ge \|v_i-\pi_S(v_i)\|,$$ which proves the result.

\end{proof}

\begin{theorem}[Teorema 4.1.1 in \cite{Sampaio:2015}]\label{inv_cones}
Let $X,Y\subset\R^{n}$ be two germs of analytic subsets. If $\varphi\colon(X,p)\to (Y,q)$ is a bi-Lipschitz homeomorphism, then there is a bi-Lipschitz homeomorphism $d\varphi\colon C_3(X,p)\to C_3(Y,q)$ such that $d\varphi(0)=0$.
\end{theorem}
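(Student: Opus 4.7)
The plan is to construct $d\varphi$ as a subsequential limit of the rescalings of $\varphi$. For each sufficiently small $t>0$, define $\varphi_t\colon \tfrac{1}{t}U\to \R^{n+1}$ by $\varphi_t(x)=\varphi(tx)/t$, where $U$ is a neighborhood of $0$ on which the germ $\varphi$ is represented. Because $\varphi$ is $K$-bi-Lipschitz and fixes $0$, each $\varphi_t$ is $K$-bi-Lipschitz and fixes $0$; consequently the family $\{\varphi_t\}_{t\in(0,\varepsilon]}$ is uniformly bounded and equicontinuous on every bounded subset of $\R^{n+1}$.

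By Arzel\`a-Ascoli applied to an exhaustion of $\R^{n+1}$ by closed balls, together with a diagonal argument, one can select a sequence $t_n\to 0^+$ along which $\varphi_{t_n}\to d\varphi$ uniformly on compact sets of $\R^{n+1}$. Running the same argument for $(\varphi^{-1})_t=(\varphi_t)^{-1}$ and passing to a further subsequence, we may assume $(\varphi_{t_n})^{-1}$ also converges uniformly on compacts, and the limit must equal $(d\varphi)^{-1}$ by uniqueness of uniform limits of inverses. Hence $d\varphi\colon (\R^{n+1},0)\to (\R^{n+1},0)$ is a $K$-bi-Lipschitz homeomorphism.

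It remains to check $d\varphi(C_3(X,0))=C_3(Y,0)$. For the forward inclusion, take $v\in C_3(X,0)$. For germs of analytic sets, the rescaled sets $\tfrac{1}{t}X\cap \overline{B_r(0)}$ converge in Hausdorff distance to $C_3(X,0)\cap \overline{B_r(0)}$ as $t\to 0^+$, so along the particular sequence $t_n$ we may pick $v_n\in \tfrac{1}{t_n}X$ with $v_n\to v$. Then $\varphi_{t_n}(v_n)=\varphi(t_n v_n)/t_n \in \tfrac{1}{t_n}Y$, and uniform equicontinuity combined with $\varphi_{t_n}\to d\varphi$ gives
\[
\|\varphi_{t_n}(v_n)-d\varphi(v)\|\le K\|v_n-v\|+\|\varphi_{t_n}(v)-d\varphi(v)\|\longrightarrow 0.
\]
Since $\varphi(t_n v_n)\in Y$ and $\varphi(t_n v_n)\to 0$, this places $d\varphi(v)$ in $C_3(Y,0)$. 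The opposite inclusion follows from the same argument applied to $\varphi^{-1}$.

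The main obstacle is the last paragraph: one must ensure that the witness sequence for $v\in C_3(X,0)$ can be constructed along the \emph{particular} subsequence $\{t_n\}$ used to define $d\varphi$. This requires the full Hausdorff (not merely Kuratowski $\limsup$) convergence of $\tfrac{1}{t}X\cap \overline{B_r(0)}$ to $C_3(X,0)\cap \overline{B_r(0)}$, an ingredient that rests on the local structure of analytic germs (curve selection, local conic structure) and is not a formal consequence of the bi-Lipschitz hypothesis on $\varphi$ alone.
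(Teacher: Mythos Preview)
The paper does not supply a proof of this theorem: it is quoted from \cite{Sampaio:2016} as an external result and used as a black box, so there is no ``paper's own proof'' to compare against. Your argument is in fact the same strategy as in the cited source: pass to rescalings $\varphi_t(x)=\varphi(tx)/t$, extract an Arzel\`a--Ascoli limit $d\varphi$, and then verify that $d\varphi$ carries $C_3(X,0)$ to $C_3(Y,0)$.

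Your self-diagnosed obstacle is exactly the right one, and it is the only point that needs care. You must show that every $v\in C_3(X,0)$ can be witnessed along the \emph{fixed} sequence $\{t_n\}$; equivalently, that $C_3(X,0)$ equals the Kuratowski $\liminf$ of $\tfrac{1}{t}X$, not only the $\limsup$. For analytic (indeed subanalytic or definable) germs this follows from the curve selection lemma: given $v\in C_3(X,0)$ there is an arc $\gamma\colon[0,\varepsilon)\to X$ with $\gamma(0)=0$ and $\gamma(s)=sv+o(s)$, and then $v_n:=\gamma(t_n)/t_n\in \tfrac{1}{t_n}X$ satisfies $v_n\to v$ along the prescribed subsequence. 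With this ingredient in place your proof is complete; without some tameness hypothesis on $X$ the conclusion can genuinely fail, so the last paragraph is not a cosmetic worry but the heart of the matter.
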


\subsection{The Nash mapping}\label{subsec:nash_map}

Given a pure $d$-dimensional $\mathbb{K}$-analytic set $X$ in $\mathbb{R}^n$, $\mathcal{N}(X)$ is the closure in $X\times Gr_{\mathbb{K}}(d,n)$ of the graph of the {\bf Gauss mapping} $\nu\colon X\setminus Sing(X)\to Gr_{\mathbb{K}}(d,n)$ given by $\nu(x)=T_xX$, where $\mathbb{K}$ is $\C$ or $\R$, and $Gr_{\mathbb{K}}(d,n)$ is the Grassmannian of $d$-dimensional $\mathbb{K}$-linear subspaces in $\mathbb{K}^n$. The set $\mathcal{N}(X)$ is called the {\bf Nash transformation of $X$}, and the projection $\eta\colon \mathcal{N}(X)\to X$ is called the {\bf Nash mapping of $X$}.

Recall that we have a distance defined in $Gr_{\R}(d,n)$ as follows: Given two linear subspaces $L_1$ and $L_2$ in $Gr_{\R}(d,n)$, 
\begin{eqnarray*}
\delta(L_1,L_2)&=& d_H(S_1,S_2)\\
            &=&\max\{\sup \limits_{a\in S_1}d(a,S_2),\sup \limits_{b\in S_2}d(b,S_1)\},
\end{eqnarray*}
where $S_i=L_i\cap \mathbb{S}^{n-1}$, $i=1,2$, and $d(a,S)=\inf\{\|a-x\|;x\in S\}$. For any set $X\subset \R^n$ that is pure $d$-dimensional and locally definable in an o-minimal structure on $\R$, we denote by $\delta_X$ the distance on $X\times Gr_{\R}(d,n)$ given by $\delta_X((x_1,L_1),(x_2,L_2))=\max \{\|x_1-x_2\|,\delta(L_1,L_2)\}$. So, $\mathcal{N}(X)$ is endowed with the distance induced by $\delta_X$.

\subsection{Multiplicity and degree of real sets}
This subsection is closely related to the subsection with the same title in \cite{FernandesJS:2022}.

Let $X\subset \R^{n}$ be a $d$-dimensional real analytic set with $0\in X$ and 
$$
X_{\C}= V(\mathcal{I}_{\R}(X,0)),
$$
where $\mathcal{I}_{\R}(X,0)$ is the ideal in $\mathbb{C}\{z_1,\ldots,z_n\}$ generated by the complexifications of all germs of real analytic functions that vanish on the germ $(X,0)$. We know that $X_{\C}$ is a germ of a complex analytic set and $\dim_{\C}X_{\C}=\dim_{\R}X$. Then, for a linear projection $\pi:\C^{n}\to\C^d$ such that $\pi^{-1}(0)\cap C(X_{\C},0) =\{0\}$, there exists an open neighbourhood $U\subset \C^n$ of $0$ such that $\# (\pi^{-1}(x)\cap (X_{\C}\cap U))$ is constant for a generic point $x\in \pi(U)\subset\C^d$. This number is the multiplicity of $X_{\C}$ at the origin and is denoted by $m(X_{\C},0)$.

\begin{definition}
With the above notation, we define the multiplicity of $X$ at the origin by $m(X,0):=m(X_{\C},0)$.
\end{definition}

More about the multiplicity of real analytic sets can be learnt in \cite{Sampaio:2022}.

\begin{remark}\label{transversal-cones}
It follows from the works in \cite{Sampaio:2022} and \cite{FernandesJS:2022} that if $X\subset \R^n$ is a pure $d$-dimensional real analytic set and $\pi\colon \R^n\to \R^d$ is a projection such that $\pi^{-1}(0)\cap C(X,0)=\{0\}$, then there are an open neighbourhood $U\subset \R^n$ of $0$ and a subanalytic set $\sigma\subset \R^d$ such that $\dim_{\R}\sigma <d$ and $m(X,0)\equiv \# (\pi^{-1}(t)\cap U)(\mod \, 2)$ for all small enough $t\in \R^d\setminus \sigma$, where $\# (\pi^{-1}(t)\cap U)$ denotes the cardinality of $\pi^{-1}(t)\cap U$.
\end{remark}

\section{Smoothness of definable sets}\label{sec:reg_definable}

In summary, in this Section, we prove the following.

\begin{theorem}\label{thm:main_thm_complete}
Let $X\subset \R^n$ be a $d$-dimensional definable set in an o-minimal structure. Then the following items are equivalent.
\begin{enumerate}
 \item  $X$ is locally closed and LNE at any $p\in X$, and $C_3(X,p)$ is a $d$-dimensional linear subspace for any $p\in X$ and depends continuously on $p$;
 \item For each $p\in X$, $X$ is Lipschitz regular at $p$ and $C_4(X,p)$ is a $d$-dimensional linear subspace;
 \item $X$ is a topological manifold and for each $p\in X$, $X$ is LNE at $p$ and $C_4(X,p)=C_3(X,p)$;
 \item $X$ is a topological manifold and $C_5(X,p)$ is a $d$-dimensional linear subspace set for any $p\in X$;
 \item $X$ is $C^1$ smooth.
\end{enumerate}
\end{theorem}

\subsection{Smoothness and the third tangent cone}
\begin{theorem}\label{thm:main_thm_one}
Let $X\subset \R^n$ be a $d$-dimensional locally closed definable set in an o-minimal structure. Then $X$ is $C^1$ smooth if and only if $X$ is LNE at any $p\in X$, $C_3(X,p)$ is a $d$-dimensional linear subspace for any $p\in X$, and it depends continuously on $p$.
\end{theorem}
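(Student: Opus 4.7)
The forward direction is standard: if $X$ is $C^1$ smooth at $p$, then locally $X$ is the graph of a $C^1$ map over $T_pX$, so its inner and outer metrics are Lipschitz equivalent near $p$ (LNE holds), $C_3(X,p)=T_pX$ is a $d$-dimensional linear subspace, and $p\mapsto T_pX$ is continuous by $C^1$ smoothness. So the content is in the converse.

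For the converse, the plan is to invoke Theorem~\ref{thm:KGN}: it suffices to prove $C_5(X,p)=C_3(X,p)$ for every $p\in X$. Lemma~\ref{inclusao_cones} gives $C_3(X,p)\subset C_5(X,p)$, so I only need the reverse inclusion. Fix $p\in X$ and write $T_p:=C_3(X,p)$, a $d$-dimensional linear subspace by hypothesis. Let $v\in C_5(X,p)$, so there exist $x_i,y_i\in X$ tending to $p$ and $t_i>0$ with $(x_i-y_i)/t_i\to v$. LNE at $p$ gives a constant $K>0$ and, for $i$ large, a definable rectifiable arc $\gamma_i$ in $X$ from $y_i$ to $x_i$ of length $L_i\le K\|x_i-y_i\|$ (existence via definable curve selection plus the LNE inequality). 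Reparametrize $\gamma_i\colon[0,L_i]\to X$ by arc length; by definability, $\gamma_i(s)\in\mathrm{Reg}_1 X$ except at finitely many $s$, and at such points $\gamma_i'(s)\in T_{\gamma_i(s)}X=C_3(X,\gamma_i(s))$ with $\|\gamma_i'(s)\|=1$.

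Since the image of $\gamma_i$ has diameter at most $L_i\to 0$, the whole arc $\gamma_i$ shrinks to $p$; combined with continuity of $q\mapsto C_3(X,q)$ at $p$, for any $\varepsilon>0$ and all $i$ large enough we have $\delta(T_{\gamma_i(s)}X,T_p)<\varepsilon$ uniformly in $s$. Writing $\gamma_i'(s)=u_i(s)+e_i(s)$ with $u_i(s)$ the orthogonal projection onto $T_p$ and $\|e_i(s)\|<\varepsilon$ a.e., and integrating from $0$ to $L_i$,
\[
\frac{x_i-y_i}{t_i}=\frac{1}{t_i}\int_0^{L_i}u_i(s)\,ds+\frac{1}{t_i}\int_0^{L_i}e_i(s)\,ds,
\]
where the first summand lies in $T_p$ and the second has norm at most $(L_i/t_i)\varepsilon\le K(\|x_i-y_i\|/t_i)\varepsilon$, which tends to $K\|v\|\varepsilon$ as $i\to\infty$. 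Since $T_p$ is closed and $\varepsilon>0$ was arbitrary, the limit $v$ lies in $T_p=C_3(X,p)$, which finishes the proof.

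The main technical obstacle I anticipate is the construction of the short definable connecting arcs $\gamma_i$ and the a.e.\ identification $\gamma_i'(s)\in T_{\gamma_i(s)}X$; both hinge on definable curve selection together with the fact that a definable arc meets the singular locus in only finitely many points. Everything past that is a uniform-continuity-plus-integration argument. Once $C_5(X,p)\subset C_3(X,p)$ is in hand, Theorem~\ref{thm:KGN} delivers $C^1$ smoothness.
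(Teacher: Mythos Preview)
Your argument is correct and substantially simpler than what you fear; the ``main technical obstacle'' you flag is in fact a non-issue. You do not need the connecting arcs $\gamma_i$ to be definable, nor do you need $\gamma_i(s)\in\mathrm{Reg}_1 X$. For any rectifiable curve $\gamma\colon[0,L]\to X$ parametrized by arc length, Rademacher gives $\gamma'(s)$ a.e., and directly from the definition of $C_3$ one has $\gamma'(s)\in C_3(X,\gamma(s))$ (take $x_j=\gamma(s+h_j)$, $t_j=|h_j|$). The LNE inequality alone supplies rectifiable arcs of length $\le 2K\|x_i-y_i\|$, which necessarily stay in the ball of that radius about $y_i$, hence collapse to $p$. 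Your integration-plus-projection estimate then goes through verbatim, with no appeal to definable curve selection or the singular locus. (As written, your assertion that a definable arc meets $\mathrm{Sing}_1 X$ in finitely many points is neither justified nor needed.)

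That said, your route is genuinely different from the paper's. The paper gives a self-contained argument: it fixes $p$, projects orthogonally onto $T_p=C_3(X,p)$, and shows in three claims that this projection is open, injective (this is where LNE enters, to rule out two local sheets meeting tangentially), and that its inverse is the graph of a $C^1$ map. No appeal to Theorem~\ref{thm:KGN} is made; on the contrary, the paper recovers Theorem~\ref{thm:KGN} as Corollary~\ref{cor:c3_c5} downstream of this result. Your proof, by contrast, reduces everything to Theorem~\ref{thm:KGN} via the clean inclusion $C_5(X,p)\subset C_3(X,p)$. The trade-off: your argument is shorter and conceptually transparent (LNE lets you integrate tangent directions, continuity of $C_3$ pins the integral to $T_p$), but it imports the machinery of \cite{KurdykaGN:2018} as a black box, whereas the paper's proof is independent of that reference and in fact re-derives it.
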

\begin{proof}
Clearly, if $X$ is $C^1$ smooth, then $X$ is LNE at any $p\in X$, $C_3(X,p)$ is a $d$-dimensional linear subspace for any $p\in X$, and it depends continuously on $p$.

Reciprocally, we assume that $X$ is LNE at any $p\in X$, $C_3(X,p)$ is a $d$-dimensional linear subspace for any $p\in X$, and depends continuously on $p$.
Fix $p\in X$. We may assume that $p=0$.

Let $\Pi\colon \R^n\to C_3(X,0)$ be the orthogonal projection onto $C_3(X,0)$.
We choose linear coordinates $(x,y)$ in $\R^n$ such that $C_3(X,0)=\{(x,y)\in \R^n;y=0\}\cong \R^d$. With this identification, $\pi(x,y)=x$. Let $L={\rm Ker}(\Pi)=\{(x,y)\in \R^n; x=0\}$.

By hypotheses, $C_3(X,p)$ is a $d$-dimensional linear subspace for any $q\in X$, and it depends continuously on $q$. Thus, we can choose a small enough open neighbourhood $U$ of $0$ such that $C_3(X,q)\cap L=\{0\}$ for all $q\in \overline{U}\cap X$. In particular, $\Pi|_{C_3(X,q)}\colon C_3(X,q)\to C_3(X,0)$ is a linear isomorphism.

Since $X$ is a locally closed set, we can shrink $U$, if necessary, so that $X\cap \overline{U}$ is a compact subset.

By shrinking $U$ again, if necessary, we may assume that $(X\cap U)\cap \Pi^{-1}(\Pi(q))$ is a finite subset for all $q\in X\cap U$.
Indeed, if this was not possible, there would be $q\in X\cap U$ and a sequence $\{q_j\}_j\subset (X\setminus \{\tilde q\})\cap \Pi^{-1}(\Pi(q))$ such that $\lim q_j=\tilde q\in X\cap \overline{U}$. Taking a subsequence, if necessary, we may assume that $\lim\frac{q_j-\tilde q}{\|q_j-\tilde q\|}=v$. Then $v\in C_3(X,\tilde q)\cap L$, which is a contradiction, since $C_3(X,\tilde q)\cap L=\{0\}$ and $v\not=0$.

From above, we obtain, in particular, that $(X\cap U)\cap L$ is a finite subset and, thus by shrinking $U$ again, if necessary, we may assume that $(X\cap \overline{U})\cap L=\{0\}$.

\begin{claim}\label{claim:open_map}
$\pi=\Pi|_A$ is an open mapping, where $A=X\cap U$. 
\end{claim}
\begin{proof}[Proof of Claim \ref{claim:open_map}]
This claim is already proved in  \cite[Lemma 4.1]{KurdykaGN:2018}, but here we present a slightly different proof following the idea of \cite[Claim 3 of Theorem 3.1]{Sampaio:2021b}. 
In anyway, our proof still shares its structure with the proof of Lemma 3.3 in \cite{GhomiH:2014} and Lemma 4.1 in \cite{KurdykaGN:2018}.

Fix $q \in A$. Since $X$ is locally closed, there is a $r > 0$ small enough such that $V:=\overline{B_r^n(q)}\cap X \subset A$ is a compact subset, and the topological boundary $\partial V$ of $V$ in $X$ lies on $\partial B_r^n(q)$, 
which implies that $q\not\in\partial V.$ Moreover, for a small enough $r>0$, we can assume that
$$
\pi(q)\not\in \pi(\partial V)
$$
otherwise, there exists a sequence of positive numbers $\{r_k\}$ tending to $0$ such that for each $k$, there is a point $q_k \in X \cap \partial\overline{ B_{r_k}^n(q)}$ with $\pi(q_k) = \pi(q)$. So, extracting a subsequence if necessary, we can assume that $\lim\limits_{k \to \infty} \frac{q_k-q}{\|q_k-q\|}=v\not=0$. This implies that $v \in L \cap C_3(X,q)$, which is a contradiction, since $C_3(X,q)\cap L=\{0\}$. 
Since $\pi (\partial V)$ is a compact set, there is $s > 0$ such that 
\begin{equation}\label{eq:pidv}
\overline{B_s^{d}(\pi(q))}\cap  \pi(\partial V) = \emptyset.
\end{equation} 
It is enough to show that $\pi(q)$ is an interior point of $\pi(V)$ in $P$.

Thus, suppose by contradiction that $\pi(q)$ is not an interior point of $\pi(V)$. Then ${B_{\delta}^{d}(\pi(q))}\not\subset \pi(V)$, for any $\delta>0$. In particular, there is a point
$$
x\in {B_{s/2}^{d}(\pi(q))}\setminus\pi(V).
$$ 
Since $x\not\in\pi(V)$ and $\pi(V)$ is compact, for $t=dist(x,\pi(V))$, we have that $\overline{B_{t}^{d}(x)}\subset C_3(X,0)$ intersects $\pi(V)$ while $B_{t}^{d}(x)\cap \pi(V)=\emptyset$. Moreover, since $q\in V$, we have 
$t\leq\|x-\pi(q)\|< s/2$, and if $x'\in \overline{B_{t}^{d}(x)}$ then
$$
\|x'-\pi(q)\|\leq \|x'-x\|+\|x'-\pi(q)\|\leq t+s/2< s,
$$
which  yields
$$
\overline{B_{t}^{d}(x)}\subset B_s^{d}(\pi(q)).
$$
Thus $\overline{B_{t}^{d}(x)}\cap\pi(\partial V)=\emptyset$ by (\ref*{eq:pidv}).
Take $\tilde x \in \overline{B_{t}^{d}(x)} \cap \pi(V)$ and $\tilde p \in \pi^{-1}(\tilde x) \cap V$. Note that $\tilde p \not \in \partial V$, so $\tilde p$ is an interior point of $V$, and hence $C_3(V,\tilde p) = C_3(X,\tilde p)$.

Since $B_{t}^{d}(x) \cap \pi(V)  = \emptyset$, no point of $V$ is contained in the cylinder $C: = \pi^{-1} (B_{t}^{d}(x))$. This implies that $ \ell \subset T_{\tilde p} \partial C$ for each line $\ell\subset C_3(X,\tilde p)$ passing through the origin. In fact, let $\ell\subset C_3(X,\tilde p)$ be a line passing through the origin, then we have two arcs $\gamma_1,\gamma_2\colon[0,\varepsilon)\to V$ such that $\gamma_i(\tau)-\tilde p=(-1)^i\tau u+o(\tau)$ for $i=1,2$, where $u\in \R^n\setminus \{0\}$ satisfies $\ell=\{\tau u;\tau\in \R\}$. Thus, suppose that $ \ell \not\subset T_{\tilde p} \partial C$, then the line $\tilde p+\ell= \{\tilde p+\tau u;\tau\in \R\}$ and $C$ intersect transversally at $\tilde p$, this forces the image of $\gamma_1$ or $\gamma_2$ to intersect $C$, which is a contradiction, since $V\cap C=\emptyset$. Therefore, $ \ell \subset T_{\tilde p} \partial C$. Since $C_3(X,\tilde p)$ is linear, we obtain that $ C_3(X,\tilde p) \subset T_{\tilde p} \partial C$.

Since $L \subset T_{\tilde p} \partial C$ and $C_3(X,\tilde p)\oplus L=\R^n$, we obtain that $T_{\tilde p} \partial C=\R^n$, which is a contradiction, since $T_{\tilde p} \partial C$ is a $(n-1)$-dimensional linear subspace.

Therefore, $\pi(q)$ is an interior point of $\pi(V)$ and this finishes the proof of Claim \ref{claim:open_map}.

\end{proof}

\begin{claim}\label{claim:injective_map}
$\pi$ is an injective mapping.
\end{claim}
\begin{proof}[Proof of Claim \ref{claim:injective_map}]
Let $k=\max\{\#(\pi^{-1}(x));x\in \pi(A)\}$ and $E=\{x\in \pi(A); k=\#(\pi^{-1}(x))\}$. We have that $E$ is an open subset of $\pi(A)$ and $\pi\colon \pi^{-1}(E)\to E$ is a cover mapping. Since $\#(\pi^{-1}(0))=1$, we have that $k>1$ if and only if $\pi(A)\setminus E\not=\emptyset$.

Assume that $k>1$ and let $x\in E$. Let $z\in \pi(A)\setminus E$ such that $r={\rm dist}(x,\pi(A)\setminus E)=\|x-z\|$. Then, $B_r(x)\subset E$ and there are $k$ Lipschitz mappings $f_1,...,f_k\colon B_r(x)\to \R^{n-d}$ such that $Graph(f_i)\subset A$, $i=1,...,k$. We have that each $f_i$ extends to a Lipschitz mapping $\tilde f_i\colon \overline{B_r(x)}\to \R^{n-d}$. Since $z\not\in E$, then $\#(\pi^{-1}(z))<k$. Thus, there are different $i_1,i_2\in\{1,...,k\}$ such that $\tilde f_{i_1}(z)=\tilde f_{i_2}(z)$. After reordering the indices, if necessary, we assume that $i_1=1$ and $i_2=2$.

Since $Graph(\Tilde{f_i})$ is bi-Lipschitz homeomorphic to the ball $\overline{B_r(x)}$ (which has a half-space as its tangent cone), since $C_3(X,p)$ is a linear subspace at every point $p \in X$, we have that the tangent cone of $Graph(\Tilde{f_i})$ at $\tilde{z}=(z,f(z))$ is also a half-space and is given by the graph of the function $C_3(\overline{B_r(x)},z) \ni v \mapsto (v,df(x)v)$, where $df$ represents the lateral derivative. Take two points $(y,f_1(y)) \in Graph(\Tilde{f_1})$ and $(y,f_2(y)) \in Graph(\Tilde{f_2})$ and curves $(\gamma_i(t)) \subset Graph(\Tilde{f_i})$, with $\gamma_i(0)=\tilde{z}$ for $i=1,2$. Since $C_3(Graph(\Tilde{f_1}),\tilde{z})=C_3(Graph(\Tilde{f_2}),\tilde{z})$, such curves can be taken so as to have the same tangent vector. But this is a contradiction, since $\frac{|\gamma_1(t)-\gamma_2(t)|}{t} \ge C>0$.

Therefore, $k=1$ and $\pi $ is an injective mapping.

\end{proof}

Let $\phi\colon (\R^{d},0)\to \R^{n-d}$ be the mapping-germ such that $\pi(x,\phi(x))=x$ for all small enough $x\in \R^d$.
\begin{claim}\label{claim:graph_c_one}
$\phi $ is a mapping of class $C^1$.
\end{claim}
\begin{proof}[Proof of Claim \ref{claim:graph_c_one}]
This claim follows from \cite[Lemma 4.2]{KurdykaGN:2018}, but for completeness, we present that proof here.

Let us begin by showing that $\phi$ is continuous. Let $U$ be the domain of $\phi$, and let $a \in U$. Take a sequence $\{x_k\}_k \to a$, then by Claim \ref{claim:open_map}, there exists a neighborhood $V$ of $(a,\phi(a))$ in $X$ such that $\pi|_V$ is open, where $\pi:\mathbb{R}^n \rightarrow \mathbb{R}^k$. Since $X$ is locally closed, we may assume, possibly by shrinking $V$, that $\overline{V} \subset X = \Gamma(\phi)$ is compact. As $a \in \pi(V)$, we have that for $k \gg 1$, $x_k \in \pi(V)$, so $\{(x_k,\phi(x_k))\}_k \subset \overline{V}$. Therefore, possibly after passing to a subsequence, we may assume that this sequence converges, say, to $(a,b)$. Thus, since $\overline{V} \subset \Gamma_{\phi}$, it follows that $\phi(a) = b = \lim_{k \to \infty} \phi(x_k)$. This shows that $\phi$ is continuous.

Now we show that $\phi$ is $C^1$.

Let $\sigma\colon \mathcal{L}(\mathbb{R}^k,\mathbb{R}^{n-k}) \ni \phi \mapsto \Gamma(\phi) \in Gr_{\R}(k,n)$, that is, the map that associates a linear map $\phi$ to its graph. But since $\sigma$ is a homeomorphism onto its image, and since $C_3(X,p)$ is not orthogonal to $\mathbb{R}^k$, we have that $C_3(X,p) \in \sigma(\mathcal{L}(\mathbb{R}^k,\mathbb{R}^{n-k}))$. Let $L(x)=\sigma^{-1}(C_3(X,(x,\phi(x))))$. But by construction, we have that $d\phi(x)=L(x)$, which is a composition of continuous functions, therefore $\phi \in C^1$.

\end{proof}
 Therefore, $X$ is locally the graph of a $C^1$ smooth mapping, and thus $X$ is $C^1$ smooth.
\end{proof}

\subsubsection{Examples}
Let us present and analyse some examples in order to show the sharpness of Theorem \ref{thm:main_thm_one}.

After proving Theorem \ref{thm:main_thm_one}, the following question naturally arises: {\it is the above theorem still valid if, instead of requiring $C_3(X,p)$ as in the theorem, we require that $C_4(X,p)$ or $C_5(X,p)$ be a $d$-dimensional linear subspace and that $X$ is LNE at $p$, for all $p \in X$?} 

It is not difficult to note that under these conditions, $C_4(X,p)$ and $ C_5(X,p)$ vary continuously. The example below shows that the above question has a negative answer.

\begin{example}\label{thm:main_exem_one}
Consider the set $X = \{(x,y) \in \mathbb{R}^2; y^2 - x^3 \le 0\}$, which is definable and satisfies $C_4(X,p) = \mathbb{R}^2 = C_5(X,p)$ for all $p \in X$. Moreover, $X$ is LNE, but it is not $C^1$-smooth.    
\end{example}

The condition ``$X$ is LNE at $p$'' cannot be removed.

\begin{example}
For $X=\{(x,y)\in \C^2;x^2-y^3=0\}$, we have that $C_3(X,p)$ is a complex line for all $p\in X$, depending continuously on $p$, but $X$ is not $C^1$ smooth.
\end{example}

The condition ``$C_3(X,p)$ depends continuously on $p$'' cannot be removed.

\begin{example}\label{exam:flat_tg_cone_not_enough}
Let $f\colon \R^2\to \R$ be the function given by 
$$
f(x,y)=\left\{\begin{array}{ll}
                \frac{y^6}{x^2+y^4},& \mbox{ if }x^2+y^4\not=0,\\
               0,&\mbox{ if }x^2+y^4=0.
              \end{array}\right.
$$
Note that $f$ is a Lipschitz function but not a $C^1$ function. Indeed, for $x^2+y^4\not=0$, the partial derivatives $
\frac{\partial f}{\partial x}(x,y)=\frac{-2xy^6}{(x^2+y^4)^2}$ and $
\frac{\partial f}{\partial x}(x,y)=\frac{6y^5}{x^2+y^4}-\frac{4y^9}{(x^2+y^4)^2}$ are bounded around $(0,0)$ and $\frac{\partial f}{\partial x}$ cannot be extended as a continuous function at $(0,0)$.
Thus, $X=Graph(f)$ is Lipschitz regular at $0$. Since $X\setminus\{0\}$ is smooth, $C_3(X,p)$ is plane for all $p\in X\setminus\{0\}$. Moreover, $P=\{(x,y,z)\in \R^3;z=0\}\subset C(X,0)$. Since $X$ is Lipschitz regular at $0$, then $C(X,0)$ is lipeomorphic to $P$, and thus $C(X,0)=P$. Therefore, $X$ is Lipschitz regular at any $p\in X$, $C_3(X,p)$ is plane for all $p\in X$, but $X$ is not $C^1$-smooth at $0$.
\end{example}

We can obtain examples with densities as close to 1 as we want.
\begin{example}\label{exam:small_density_not_enough}
For each $\varepsilon>0$, let $X_{\varepsilon}=Graph(\varepsilon f)$, where $f$ is the function defined in Example \ref{exam:flat_tg_cone_not_enough}. Since $f$ is Lipschitz, the density of $X_ {\varepsilon}$ at $0$, $\theta(X_ {\varepsilon},0)$, goes to 1 when $\varepsilon \to 0^+$ and, in particular, $\theta(X_ {\varepsilon},0)<\frac{3}{2}$ for a small enough $\varepsilon>0$. However, $X_{\varepsilon}$ is not $C^1$ smooth at $0$ and $C_3(X_{\varepsilon},p)$ is plane for all $p\in X_{\varepsilon}$.
\end{example}

We can obtain even an algebraic example.

\begin{example}
 Let $X=\{(x,y,z)\in \R^3;z^{15}=x^9y^7+x^7y^9\}$. We have that for each $p\in X$, $X$ is Lipschitz regular at $p$ and $C_3(X,p)$ is a plane. However, $X$ is not $C^1$ smooth at $0$.
\end{example}

\subsection{Smoothness and the forth tangent cone}
\begin{theorem}\label{thm:main_thm}
Let $X\subset \R^n$ be a $d$-dimensional definable set in an o-minimal structure. Then $X$ is $C^1$ smooth if and only if $X$ is Lipschitz regular at any $p\in X$, and $C_4(X,p)$ is a $d$-dimensional linear subspace for any $p\in X$.
\end{theorem}
\begin{proof}

It is clear that if $X$ is $C^1$ smooth, then $X$ is Lipschitz regular at any $p\in X$, and $C_4(X,p)$ is a $d$-dimensional linear subspace for any $p\in X$.

Assume that $C_4(X,p)$ is a $d$-dimensional linear subspace for any $p\in X$. Fix $p\in X$. By hypotheses, $X$ is Lipschitz regular at $p$.
By the bi-Lipschitz invariance of the tangent cones, there is a lipeomorphism $\phi\colon \R^d\to C_3(X,p)$ such that $\phi(0)=0$. In particular, the mapping $\phi\colon \R^d\to C_4(X,p)$ is a continuous and injective mapping, and thus an open mapping. This shows that $C_3(X,p)$ is an open subset of $C_4(X,p)$. Since $C_3(X,p)$ is a closed subset of $C_4(X,p)$ and $C_4(X,p)$ is a connected set, we have that $C_3(X,p)=C_4(X,p)$.

Therefore, $C_3(X,p)$ is a $d$-dimensional linear subspace for any $x\in X$.

\begin{claim}\label{claim:cont_c4}
$C_4(X,p)$ depends continuously on $p$.
\end{claim}
\begin{proof}[Proof of Claim \ref{claim:cont_c4}]
Suppose that the claim is not true. Then, since the Grassmannian is compact, there are $p\in X$ and sequences $(x_k)_k,(y_k)_k \subset X$, both converging to $p$ such that $\lim C_4(X,x_n)=S \ne T=\lim C_4(X,y_n)$. By Proposition $\ref{caracterizacao do C4}$, there exist $x'_n,y'_n \in Reg_1(X)$, both converging to $p$, such that $d(C_4(X,x_n),T{x'_n}X)<\frac{1}{n}$ and $d(C_4(X,y_n),T{y'_n}X)<\frac{1}{n}$. Then $T{x'_n}X \to S$ and $T{y'_n}X \to T$. However, $T,S \subset C_4(X,p)$, and since $T$, $S$ and $C_4(X,p)$ are subspaces of dimension $d$, it follows that $T=C_4(X,p)=S$, which is a contradiction.
\end{proof}

Therefore, $C_3(X,p)$ is a $d$-dimensional linear subspace for any $p\in X$, and depends continuously on $p$ and, moreover, $X$ is LNE at any $p\in X$. By Theorem \ref{thm:main_thm_one}, $X$ is $C^1$ smooth.
\end{proof}

\subsubsection{Examples}
Let us present and analyse some examples in order to show the sharpness of Theorem \ref{thm:main_thm}.

The condition that $X$ is Lipschitz regular at any $p\in X$ cannot be removed, even if we require that $X$ is a topological manifold.

\begin{example}\label{exam:real_cusp}
For $X=\{(x,y)\in \R^2;x^2-y^3=0\}$, we have that $X$ is a topological manifold and $C_4(X,p)$ is a real line for all $p\in X$. But $C_3(X,0)$ is not a real line.
\end{example}
We can even obtain an example that is a complex algebraic set.
\begin{example}\label{exam:complex_cusp}
For $X=\{(x,y)\in \C^2;x^2-y^3=0\}$, we have that $X$ is a topological manifold and $C_4(X,p)=C_3(X,p)$ is a complex line for all $p\in X$, but $X$ is not $C^1$ smooth.
\end{example}

The condition that $C_4(X,p)$ is a $d$-dimensional linear subspace for any $p\in X$ cannot be removed. 

\begin{example}
For $X=\{(x,y)\in \R^2;y=|x|\}$, we have that $X$ is Lipschitz regular at any $p\in X$, but $X$ is not $C^1$ smooth.
\end{example}

The condition that $C_4(X,p)$ is a linear subspace for all $p\in X$ cannot be weakened to ``$C_3(X,p)$ being a plane for all $p\in X$''. For instance, see Example \ref{exam:flat_tg_cone_not_enough}.

\subsection{Smoothness and the third and fourth tangent cones}

\begin{theorem}\label{thm:c3=c4_regularity}
     Let $X \subset \mathbb{R}^n$ be a definable  set in an o-minimal structure. Then $X$ is $C^1$ smooth if and only if $X$ is a topological manifold that is LNE at any $p\in X$ and $C_3(X,p) = C_4(X,p)$ for all $p \in X$.
\end{theorem}

\begin{proof}
It is clear that if $X$ is $C^1$ smooth, then $X$ is a topological manifold that is LNE at any $p\in X$ and $C_3(X,p) = C_4(X,p)$ for all $p \in X$.

Reciprocally, assume that $X$ is a topological manifold that is LNE at any $p\in X$ and $C_3(X,p) = C_4(X,p)$ for all $p \in X$.

Let us show that $C_4(X,p)$ is a $d-$dimensional linear subspace for every $p \in X$. If $p \in Reg_1(X)$, there is nothing to do.
By Proposition V.11 in \cite{Parusinski:2023}, there exists a definable subset $Y \subset X$, $\dim Y \le d - 2$, such that any $p \in X \setminus (Reg_1(X) \cup Y)$ satisfies the following property: There are $m \in \mathbb{N}$ and a neighbourhood $V$ of $p$ such that
\begin{equation*}
   Reg_1(X) \cap V = X_1 \sqcup \dots \sqcup X_m, 
\end{equation*}
with $X_i$ connected, and such that for every $i = 1, \dots, m$, the closure of each $X_i$ in $V$ is a $C^1$ manifold with boundary $V \cap (X \setminus Reg_1(X))$.

Thus, if $p \in X\setminus(Reg_1(X)\cup Y)$, then, since closure $X_i$ is a $C^1$ manifold with boundary and $p \in \partial X_i$, we have that $C_4(X_i,p)$ is a single $d-$dimensional linear subspace, for $1\le i \le m$. But since $X$ is a topological manifold, we have only two components, $X_1$ and $X_2$. Since $X_i$ is a $C^1$ manifold with boundary and $p \in \partial X_i$, it follows that $C_3(X_i,p)$ is a $d$-dimensional half-space. Therefore, if $C_3(X_1,p) \cup C_3(X_2,p)$ were not a d-dimensional linear subspace, then by the hypothesis that $C_3(X,p)=C_4(X,p)$, we would have that $C_4(X,p)=C_3(X_1,p) \cup C_3(X_2,p)$ would be two distinct $d$-dimensional linear subspaces, which is a contradiction since it is a union of two d-dimensional half-spaces.

Now let us show that the same holds at points $p \in Y$.
\begin{claim}\label{claim:c3c4_linear}
$C_4(X,p)$ is a $d-$dimensional linear subspace for all $p \in Y$.
\end{claim}
\begin{proof}[Proof of Claim \ref{claim:c3c4_linear}]
    Since X is definable, then $C_4(X,x_0)=C_3(X,x_0)$ is definable, therefore it is simply a finite union of d-dimensional subspaces.
    Suppose there exists $x_0 \in Y$ such that $C_4(X,x_0)=C_3(X,x_0)$ is at least two $d-$dimensional linear subspaces. Since $\dim(Y)\le d-2$, then $X\setminus Y$ is path-connected, that is, given two points $p,q \in X\setminus Y$, there exists a path $\alpha:[0,1]\rightarrow X$ such that $\alpha([0,1])\subset X\setminus Y$, consequently we have that $C_4(X,\alpha(t))$ is a $d-$dimensional linear subspace, for every $t \in [0,1]$. Since we are assuming that $C_4(X,x_0)$ is at least two $d-$dimensional linear subspaces, say $T_1$ and $T_2$, we can assume that distance, $\delta(T_1,T_2)>\epsilon$. By the definition of $C_4(X,0)$, there exist sequences $(x_k)_k$ and $(y_k)_k$ of regular points converging to $x_0$, such that $T_1=\lim_{x_k \to x_0}T_{x_k}X$ and $T_2=\lim_{y_k \to x_0}T_{y_k}X$, but since $(x_k)_k$ and $(y_k)_k$ are sequences of regular points,of dimension $d$, then $(x_k)_k,(y_k)_k\subset X\setminus Y$, so there exists a path $\alpha_n:[0,1] \to X\setminus Y$ such that $\alpha_n$ connects the points $x_n$ and $y_n$. But as $\delta(T_1,T_2)>\epsilon$, there exists $N_0 \in \mathbb{N}$, such that $\delta(T_1,T_{x_m}X)<\frac{\epsilon}{2025}$ and $\delta(T_2,T_{y_m}X)<\frac{\epsilon}{2025}$, for $m\ge N_0$. Then there exists a sequence $(\alpha_n(t_n))_n$, where $\delta(T_1,T_{\alpha_n(t_n)}X)>\frac{\epsilon}{2025}$ and $\delta(T_2,T_{\alpha_n(t_n)}X)>\frac{\epsilon}{2025}$, so up to taking a subsequence, we can assume that $T_{\alpha_n(t_n)}X \to T_3$ which is different from $T_1$ and $T_2$, but repeating this argument $m+1$ times, where $m=\#C_4(X,x_0)$, we reach a contradiction.
\end{proof}
Thus $C_3(X,p)$ is a $d-$dimensional linear subspace for all $p \in Y$, and it depends continuously on $p$ (see Claim \ref{claim:cont_c4}). Since $ X$ is LNE at any $p\in X$, by Theorem \ref{thm:main_thm_one}, $X$ is $C^1$ smooth.
\end{proof}

\begin{corollary}
Let $X\subset \R^n$ be a $d$-dimensional definable set in an o-minimal structure. Then $X$ is smooth if and only if $X$ is Lipschitz regular at any $p\in X$ and $C_4(X,p)=C_3(X,p)$ for any $p\in X$.
\end{corollary}

We obtain the following generalization for Ghomi-Howard's theorem (Theorem \ref{thm:GhomiH}).
\begin{corollary}\label{cor:gen_thm:GhomiH}
Let $X\subset \R^n$ be an $(n-1)$-dimensional definable set in an o-minimal structure. Then, $X$ is $C^1$ smooth if and only if $X$ is a topological manifold and $C_3(X,p)=C_4(X,p)$ for all $p\in X$.
\end{corollary}

\begin{proof}
Since $X$ is a topological manifold, it follows from the proof of Theorem \ref{thm:c3=c4_regularity} that $C_3(X,p)=C_4(X,p)$ is a $d-$dimensional linear subspace for all $p \in X$. Therefore, we have that $C_3(X,p)$ varies continuously on $p$, since $C_4(X,p)$ is a $d-$dimensional linear subspace. It follows from Theorem \ref{thm:GhomiH} that $X$ is $C^1$ smooth.
\end{proof}

Another consequence that follows from the proof of Theorem \ref{thm:c3=c4_regularity} is the following:

\begin{corollary}
    Let $X \subset \mathbb{R}^n$ be a definable set in an o-minimal structure such that $C_4(X,p)$ is a finite union of $d-$dimensional linear subspaces. If $C_3(X,p)$ is $d-$dimensional linear subspace, then $C_4(X,p)$ is $d-$dimensional linear subspace.
\end{corollary}

\subsubsection{An application on the study of the Nash mapping}\label{sec:nash_map}
As a consequence of Theorem \ref{thm:main_thm} and \cite[Remark 3.4]{Sampaio:2025b}, we obtain the following result.
\begin{corollary}\label{cor:real_analytic_ck_smooth_hoelder}
Let $X\subset \R^n$ be a locally closed set that is pure $d$-dimensional and locally definable in an o-minimal structure on $\R$, and let $k$ be a nonnegative integer and $\alpha\in [0,1]$. Then $X$ is $C^{k+1,\alpha}$ smooth if, and only if, the mapping $\eta\colon\mathcal{N}(X)\to X$ is a homeomorphism such that $\eta^{-1}$ is $C^{k,\alpha}$ smooth, $X$ is LNE at each $p\in X$ and $C_3(X,p) = C_4(X,p)$ for all $p \in X$.
\end{corollary}

\subsubsection{Examples}
Let us present and analyse some examples in order to show the sharpness of Theorem \ref{thm:c3=c4_regularity}.

The condition that $X$ is LNE at any $p\in X$ cannot be removed. For instance, see Examples \ref{exam:real_cusp} and \ref{exam:complex_cusp}.

The condition that $X$ is a topological manifold cannot be removed.

\begin{example}
For $X=\{(x,y)\in \R^2;xy=0\}$, we have that $X$ is LNE at any $p\in X$ and $C_4(X,p)=C_3(X,p)$ for all $p\in X$, but $X$ is not $C^1$ smooth.
\end{example}

The condition that $C_4(X,p)=C_3(X,p)$ for all $p\in X$ cannot be removed. 

\begin{example}
For $X=\{(x,y)\in \R^2;y=|x|\}$, we have that $X$ is a topological manifold that is LNE at any $p\in X$, but $X$ is not $C^1$ smooth.
\end{example}

\subsection{Smoothness and the fifth tangent cone}
\begin{proposition}\label{prop:dim_c5}
Let $X \subset \mathbb{R}^n$ be a closed, $d$-dimensional set definable in an o-minimal structure. If $dim(C_5(X,p))=d$ and $\pi\colon \R^n\to \R^d$ is a linear projection such that $\pi^{-1}(0)\cap C_5(X,p)={0}$, then $X$ is, in a neighborhood of $p$, the graph of a Lipschitz function defined on $\pi(U)$, for some open set $U$ of $X$ containing $p$.
\end{proposition}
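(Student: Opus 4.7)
The plan is to exhibit a $d$-dimensional linear subspace $V\subset \R^n$ onto which the orthogonal projection $\pi$ realizes a neighbourhood of $p$ in $X$ as the graph of a Lipschitz function. The first step is to choose $V$ so that its orthogonal complement $L=V^\perp$, of dimension $n-d$, satisfies $L\cap C_5(X,p)=\{0\}$. Such an $L$ exists by a dimension count in the Grassmannian $G(n-d,n)$: since $C_5(X,p)$ is a definable cone of dimension $d$, its set of directions in $S^{n-1}$ is definable of dimension $d-1$, and the set of $L\in G(n-d,n)$ meeting one of these directions is a proper definable subset of $G(n-d,n)$, so a generic $L$ works.

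With $L$ and $\pi\colon \R^n\to V$ chosen, the next step is to prove that there is $r>0$ such that, setting $U=X\cap B(p,r)$, the restriction $\pi|_U$ is injective and admits a Lipschitz inverse. I would argue by contradiction: if the claim failed for every $r>0$, one could extract sequences $\{x_k\}_k,\{y_k\}_k\subset X$ converging to $p$ with $x_k\neq y_k$ and $\|\pi(x_k)-\pi(y_k)\|/\|x_k-y_k\|\to 0$. Passing to a subsequence, the unit vectors $(x_k-y_k)/\|x_k-y_k\|$ converge to some $v\in S^{n-1}$, which by definition lies in $C_5(X,p)$. The displayed estimate forces $\pi(v)=0$, i.e.\ $v\in L\cap C_5(X,p)=\{0\}$, contradicting $\|v\|=1$.

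Finally, identifying $\R^n=V\oplus L$ and defining $f\colon \pi(U)\to L$ by $f(v)=\mathrm{pr}_L((\pi|_U)^{-1}(v))$, the map $f$ is Lipschitz since $(\pi|_U)^{-1}$ is; by construction, $U=\{(v,f(v)):v\in \pi(U)\}$ is the graph of $f$, and since $U$ is an open neighbourhood of $p$ in $X$, this proves the proposition.

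The main delicate point is the contradiction argument of the second step, which must rule out failure of both injectivity and of the Lipschitz-inverse estimate simultaneously; bundling them together through the single ratio $\|\pi(x_k)-\pi(y_k)\|/\|x_k-y_k\|$ is the key trick, and it uses in an essential way the characterisation of $C_5(X,p)$ as the set of limits of normalised secants between pairs of points in $X$ tending to $p$. The Grassmannian dimension count in the first step is routine in the o-minimal setting, provided one keeps track of the definability of the cone and of its projectivisation.
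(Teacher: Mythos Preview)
Your proposal is correct and follows essentially the same approach as the paper: choose a $d$-plane $V$ whose orthogonal complement meets $C_5(X,p)$ only at the origin, then derive a contradiction from a would-be unit secant limit landing in $V^\perp\cap C_5(X,p)$. The only cosmetic difference is that the paper treats injectivity of $\pi$ and Lipschitzness of $f$ in two separate contradiction arguments, whereas you bundle both into the single estimate $\|\pi(x_k)-\pi(y_k)\|/\|x_k-y_k\|\to 0$; your version is slightly cleaner and also makes explicit that $x_k,y_k\to p$, which is needed for the limit to lie in $C_5(X,p)$.
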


\begin{proof}
Note that there exists a neighborhood $U$ of $0$ in $X$ such that $\pi:X\cap U \rightarrow P$ is injective. Otherwise, given any neighborhood of $0$, there would exist $x_k,y_k \in U$ such that $\pi(x_k)=\pi(y_k)$ and, therefore, $\frac{x_k-y_k}{\|x_k-y_k\|}$ lies in the kernel of $\pi$. But, up to a subsequence, we can assume that $v=\lim_{k\to \infty}\frac{x_k-y_k}{\|x_k-y_k\|}$ exists, hence $v \in C_5(X,0)$ and also lies in the kernel of $\pi$. Thus, on one hand, $v=0$, and on the other hand, it has norm $1$, which is a contradiction. Therefore, such a neighborhood exists. Since $\pi:U\cap X \rightarrow \mathbb{R}^d$ is injective, we can define a function $f$ on $\pi(U\cap X)$ such that the graph of $f$ is $X \cap U$. It remains to show that $f$ is Lipschitz.

Suppose, for contradiction, that $f$ is not Lipschitz. Then for every given $k \in \mathbb{N}$, there exist sequences $(x_k)_k,(y_k)_k \subset \pi(X\cap U)$ such that
$$\frac{\|f(x_k)-f(y_k)\|}{\|(x_k-y_k)\|} \ge k,$$
but since $\frac{((x_k,f(x_k))-(y_k,f(y_k)))}{\|((x_k,f(x_k))-(y_k,f(y_k)))\|}$ has norm 1, it converges, up to a subsequence. However,
$$\frac{\|x_k-y_k\|}{\|((x_k,f(x_k))-(y_k,f(y_k)))\|}\le \frac{\|x_k-y_k\|}{\|f(x_k)-f(y_k)\|} \to 0,$$
so the sequence $\frac{((x_k,f(x_k))-(y_k,f(y_k)))}{\|((x_k,f(x_k))-(y_k,f(y_k)))\|}$ converges to a vector that lies in $\pi^{-1}(0) \cap C_5(X,0)=\{0\}$, but on the other hand, it has norm $1$, which is a contradiction. Therefore, $f$ is Lipschitz. 
\end{proof}

The converse of the above proposition does not hold.
\begin{example}\label{Graf_mod}
Let $X=\{(x,y)\in R^2; y=|x|\}$. We have that $X$ is the graph of a Lipschitz function, but $C_{5}(X,0)=\{(x,y)\in R^2;-|x| \leq y\leq |x| \}$.
\end{example}

Thus, we have our first result in this subsection regarding the regularity of sets.
\begin{theorem}\label{thm:dim_c5}
Let $X\subset \R^n$ be a locally closed $d$-dimensional definable set in an o-minimal structure. Then, $X$ is $C^1$ smooth if and only if $C_5(X,p)$ is a $d$-dimensional linear subspace and the restriction to $X$ of the orthogonal projection $\pi\colon \R^n\to C_5(X,p)$ is, around $p$, an open mapping for any $p\in X$.
\end{theorem}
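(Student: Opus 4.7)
The plan is to prove both directions using the machinery already developed. The forward direction is immediate: if $X$ is $C^1$ smooth at $p$, then $C_5(X,p)$ equals the tangent space $T_pX$, which is a $d$-dimensional linear subspace, and $X$ is locally the graph of a $C^1$ function over $T_pX$, so the restriction of the orthogonal projection $\pi\colon \R^n\to T_pX$ to $X$ is a local diffeomorphism and hence an open mapping.

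For the converse, assuming $C_5(X,p)$ is a $d$-dimensional linear subspace and $\pi|_X$ is open for every $p\in X$, I would proceed in three steps. \emph{First}, I would establish that $X$ is Lipschitz regular at every point. Fix $p\in X$. By Proposition \ref{prop:dim_c5} there is an open neighborhood $U$ of $p$ in $X$ such that $X\cap U$ is the graph of a Lipschitz function $f\colon \pi(U)\to C_5(X,p)^\perp$. The openness of $\pi|_X$ guarantees that $\pi(U)$ is an open subset of $C_5(X,p)\cong \R^d$, so the projection identifies $X\cap U$ with an open set in $\R^d$ via a bi-Lipschitz homeomorphism; after shrinking $U$, this open set contains a Euclidean ball, yielding Lipschitz regularity at $p$.

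\emph{Second}, I would show that $C_4(X,p)$ is a $d$-dimensional linear subspace. Because $X$ is definable and $d$-dimensional, $\mathrm{Reg}_1(X)$ is dense in $X$, so I can pick a sequence $\{x_j\}\subset \mathrm{Reg}_1(X)$ with $x_j\to p$ and, by compactness of the Grassmannian, extract a subsequence along which $T_{x_j}X\to T$ for some $d$-dimensional linear subspace $T$. By Lemma \ref{caracterizacao do C4} we have $T\subset C_4(X,p)$, and by Lemma \ref{inclusao_cones} we have $C_4(X,p)\subset C_5(X,p)$. Since $C_5(X,p)$ is a $d$-dimensional linear subspace already containing the $d$-dimensional linear subspace $T$, these coincide, and therefore $C_4(X,p)=C_5(X,p)=T$ is a $d$-dimensional linear subspace.

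\emph{Third}, I would invoke Theorem \ref{thm:main_thm}: having Lipschitz regularity at every point together with $C_4(X,p)$ a $d$-dimensional linear subspace at every point, the implication $(2)\Rightarrow(3)$ of that theorem delivers $C^1$ smoothness. I expect the only step requiring genuine care to be the passage from openness of $\pi|_X$ to Lipschitz regularity in the first step; once that is in hand, the argument is a clean concatenation of previously established results, and the role of the openness hypothesis is exactly to upgrade Proposition \ref{prop:dim_c5} (which by itself only gives a graph of a Lipschitz function over $\pi(U)$, without asserting that $\pi(U)$ is open in $C_5(X,p)$) to a bi-Lipschitz identification with an open subset of $\R^d$.
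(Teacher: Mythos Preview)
Your proposal is correct and follows essentially the same approach as the paper: use Proposition~\ref{prop:dim_c5} together with the openness hypothesis to obtain Lipschitz regularity, then deduce that $C_4(X,p)$ is a $d$-dimensional linear subspace from the inclusion $C_4(X,p)\subset C_5(X,p)$, and finally invoke Theorem~\ref{thm:main_thm}. Your second step is phrased slightly more explicitly (choosing a convergent sequence of tangent spaces) whereas the paper simply appeals to the fact that $C_4(X,p)$ is a union of $d$-planes, but the content is the same.
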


\begin{proof}
Since $C_5(X,p)$ is a $d$-dimensional linear subspace, then, up to a linear change of coordinates, we may assume that $C_5(X,p)=\{(x,y)\in \mathbb{R}^d\times \mathbb{R}^{n-d};y=0\}$. Let $\pi:\mathbb{R}^n \rightarrow C_5(X,p)$ be the canonical projection. We have that $\pi^{-1}(0) \cap C_5(X,p)=\{0\}$. But by the same argument used in Proposition~\ref{prop:dim_c5}, we have that $\pi|_{X \cap U}:X \cap U \rightarrow \pi(X\cap U)$ is bi-Lipschitz, and since the restriction to $X$ of the orthogonal projection $\pi\colon \R^n\to C_{5}(X,p)$ is an open mapping for any $p\in X$, then $\pi(X\cap U) \subset P$ is open. Hence, $X$ is Lipschitz regular at $p$.
Moreover, since $X$ is $d$-dimensional, we have that $C_4(X,p)$ is a union of $d$-dimensional linear subspaces, but since $C_4(X,p) \subset C_5(X,p)$, it follows that $C_4(X,p)$ is a $d$-dimensional linear subspace. Therefore, by Theorem~\ref{thm:main_thm}, the result follows.
\end{proof}

\begin{corollary}\label{cor:c5_linear_var_top}
Let $X\subset \R^n$ be a $d$-dimensional definable set in an o-minimal structure. Then, $X$ is $C^1$ smooth if and only if $X$ is a topological manifold and $C_5(X,p)$ is a $d$-dimensional linear subspace for any $p\in X$.
\end{corollary}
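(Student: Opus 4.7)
The forward implication is immediate: a $C^1$ smooth $d$-dimensional set is a topological $d$-manifold, and at each $p\in X$ the tangent cone $C_5(X,p)$ coincides with the tangent space $T_pX$, which is a $d$-dimensional linear subspace.

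For the converse, my plan is to reduce to Theorem~\ref{thm:dim_c5}. That theorem requires three things: (i) $X$ is locally closed, (ii) $C_5(X,p)$ is a $d$-dimensional linear subspace for every $p$, and (iii) the orthogonal projection $\pi\colon\R^n\to C_5(X,p)$ restricted to $X$ is an open mapping near each $p$. Hypothesis (ii) is given. Hypothesis (i) is automatic, because a topological manifold is locally compact and a locally compact subspace of $\R^n$ is locally closed. So the real content is establishing (iii).

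To verify (iii), fix $p\in X$ and let $\pi$ be the orthogonal projection onto $C_5(X,p)$. Because $C_5(X,p)$ is itself the target subspace, $\ker(\pi)\cap C_5(X,p)=\{0\}$, and the sequence argument given in the proof of Proposition~\ref{prop:dim_c5} produces a neighborhood $U_1$ of $p$ in $X$ on which $\pi|_{X\cap U_1}$ is injective. Since $X$ is a topological $d$-manifold, there is also a neighborhood $U_2$ of $p$ with $X\cap U_2$ homeomorphic to an open subset of $\R^d$. Set $U=U_1\cap U_2$. Then $\pi|_{X\cap U}$ is a continuous injection between two topological $d$-manifolds, so Brouwer's invariance of domain theorem applies and shows $\pi|_{X\cap U}$ is an open mapping. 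Theorem~\ref{thm:dim_c5} then yields $C^1$ smoothness at $p$, and since $p$ was arbitrary, $X$ is $C^1$ smooth.

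The main obstacle I anticipate is the bridging step between the topological manifold hypothesis and the openness condition needed by Theorem~\ref{thm:dim_c5}; the right tool is invariance of domain, and the reason it applies is precisely the matching of dimensions coming from $\dim X=d=\dim C_5(X,p)$. A minor point to handle carefully is arranging the injectivity neighborhood and the Euclidean-coordinate neighborhood simultaneously, which is why I take their intersection.
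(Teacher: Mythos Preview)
Your proof is correct and follows essentially the same route as the paper: invoke the injectivity argument from Proposition~\ref{prop:dim_c5}, apply invariance of domain using the topological-manifold hypothesis to get openness of $\pi$, and then conclude via Theorem~\ref{thm:dim_c5}. You are in fact slightly more careful than the paper in explicitly deriving local closedness from the manifold hypothesis, which Theorem~\ref{thm:dim_c5} requires but the Corollary does not assume.
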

\begin{proof}
We already know from Proposition~\ref{prop:dim_c5} that $\pi|_{X \cap U}:X \cap U \rightarrow \pi(X\cap U)$ is bi-Lipschitz. Therefore, $\pi:X\cap U \rightarrow C_5(X,p)$ is continuous and injective between two topological manifolds of the same dimension. It follows from the Invariance of dimension Theorem that $\pi|_{X\cap U}:X\cap U \rightarrow C_5(X,p)$ is an open mapping. The result then follows from Theorem~\ref{thm:dim_c5}.
\end{proof}

Now, we are ready to state and prove our main result of this subsection.

\begin{theorem}\label{thm:dim_c5_general}
Let $X\subset \R^n$ be a $d$-dimensional definable set in an o-minimal structure. Then, $X$ is $C^1$ smooth if and only if $X$ is a topological manifold and $C_5(X,p)$ is a $d$-dimensional set for any $p\in X$.
\end{theorem}

\begin{proof}
Given $p\in X$, by hypotheses, $C_5(X,p)$ is a $d$-dimensional set. Then there exist a linear projection $\pi:\mathbb{R}^n \rightarrow  \R^d$ such that $\pi^{-1}(0)\cap C_5(X,p)={0}$.
By the same argument used in Proposition $\ref{prop:dim_c5}$, we have that $\pi|_{X \cap U}:X \cap U \rightarrow \pi(X\cap U)$ is bi-Lipschitz, and since $\pi:X\cap U \rightarrow \R^d$ is continuous and injective between topological spaces of the same dimension, it follows from the Domain Invariance Theorem that $\pi$ is open, hence $X$ is Lipschitz regular at $p$.

Since $X$ is $d-$dimensional, we have that $C_4(X,p)$ is a union of $d-$dimensional linear subspaces, but as $C_4(X,p) \subset C_5(X,p)$ and $C_5(X,p)$ is a $d-$dimensional set, it follows that $C_4(X,p)$ is a finite union of $d-$dimensional linear subspaces.

\begin{claim}\label{claim:c3_linear}
     $C_3(X,p)$ is a $d-$dimensional linear subspace for every $p \in X$.
\end{claim}
\begin{proof}[Proof of Claim \ref{claim:c3_linear}]
Hence, $\dim C_5(X,p)=d$, for all $p\in X$. Since $\dim C_4(X,p)\leq \dim C_5(X,p)=d$, $C_4(X,p)$ is a finite union of $d$-dimensional linear subspaces. Then $C_3(X,p)$ is contained in a finite union of $d$-dimensional linear subspaces.
Assume that $C_3(X,p)$ is not contained in a unique $d$-dimensional linear subspace. Then there are $x_1,x_2\in C_3(X,p)$, $\epsilon>0$ and two distinct $d$-dimensional linear subspaces $L_1$ and $L_2$, both contained in $C_4(X,p)$, such that $x_1\in L_1\setminus L_2$, $x_2\in L_2\setminus L_1$, $C_1\cap L_1\subset C_3(X,p)\setminus L_2$ and $C_2\cap L_2\subset C_3(X,p)\setminus L_1$, where $C_i=\{tx;t>0$ and $x\in B_{\epsilon}(x_i)\}$ for $i=1,2$.
Note that $C=\{v-w; v, w\in (C_1\cap L_1)\cup (C_2\cap L_2)\}$ is a subset of $C_5(X,p)$ with $\dim C>d$, which is a contradiction with the fact $\dim C_5(X,p)=d$.
Therefore,
$C_3(X,p)$ is contained in a unique $d$-dimensional linear subspace, let us say $L$. Since $C_3(X,p)$ is lipeomorphic to $L$, then $C_3(X,p)=L$. 
\end{proof}
Now let us show that $C_4(X,p)$ is a $d-$dimensional linear subspace for every $p \in X$. If $p \in Reg_1(X)$, there is nothing to do.
By Proposition V.11 in \cite{Parusinski:2023}, there exists a definable subset $Y \subset X$, $\dim Y \le d - 2$, such that any $p \in X \setminus (Reg_1(X) \cup Y)$ satisfies the following property: There are $m \in \mathbb{N}$ and a neighbourhood $V$ of $p$ such that
\begin{equation*}
   Reg_1(X) \cap V = X_1 \sqcup \dots \sqcup X_m, 
\end{equation*}
with $X_i$ connected, and such that for every $i = 1, \dots, m$, the closure of each $X_i$ in $V$ is a $C^1$ manifold with boundary $V \cap (X \setminus Reg_1(X))$.

Thus, if $p \in X\setminus(Reg_1(X)\cup Y)$, then, since closure $X_i$ is a $C^1$ manifold with boundary and $p \in \partial X_i$, we have that $C_4(X_i,p)$ is a single $d-$dimensional linear subspace, for $1\le i \le m$. But since $C_3(X_i,p)$ is a half space, $C_3(X_i,p)\subset C_4(X_i,p)$, and $C_3(X,p)$ is a $d-$dimensional linear subspace, it follows that $C_3(X,p)= C_4(X_i,p)$, for $1\le i \le m$. Then $C_4(X,p)=\bigcup\limits_{i=1}^m C_4(X_i,p)=C_3(X,p)$. 

By proceeding as in the proof of Claim \ref{claim:c3c4_linear}, we obtain that $C_4(X,p)$ is a $d-$dimensional linear subspace for every $p \in Y$.

Therefore, we have proved that $X$ is Lipschitz regular at any $p\in X$ and $C_4(X,p)$ is a $d$-dimensional linear subspace for any $p\in X$. By Theorem \ref{thm:main_thm}, $X$ is $C^1$ smooth.
\end{proof}

\begin{corollary}\label{cor:dimc5_proj_open}
Let $X\subset \R^n$ be a locally closed $d$-dimensional definable set in an o-minimal structure. Then, $X$ is $C^1$ smooth if and only if for each $p\in X$, $C_5(X,p)$ is a $d$-dimensional subset, and there are a linear projection $\pi \colon \R^n\to \R^d$ and a neighborhood $U$ of $p$ such that $\pi^{-1}(0)\cap C_5(X,p)={0}$ and the restriction of $\pi$ to $X\cap U$ is an open mapping.
\end{corollary}

As an application, we obtain the main result in \cite{KurdykaGN:2018}.

\begin{corollary}[Theorem 4.7 in \cite{KurdykaGN:2018}] \label{cor:c3_c5}
Let $X\subset \R^n$ be a locally closed $d$-dimensional definable set in an o-minimal structure. Then, $X$ is $C^1$ smooth if and only if $C_5(X,p)=C_3(X,p)$ for any $p\in X$.
\end{corollary}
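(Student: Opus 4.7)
The plan is to reduce to Theorem \ref{thm:dim_c5}: assuming $C_5(X,p)=C_3(X,p)$ at every $p \in X$, I need to verify that $C_5(X,p)$ is a $d$-dimensional linear subspace and that the restriction to $X$ of the orthogonal projection $\pi\colon \R^n \to C_5(X,p)$ is an open map near each $p$. The forward direction --- $X$ is $C^1$ smooth implies $C_3(X,p)=C_5(X,p)=T_pX$ --- is immediate from a first-order expansion of the local $C^1$ graphing function, so the substance is the converse.

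The first substantive step is to show that each $C_3(X,p)$ is a $d$-dimensional linear subspace. Since $C_5(X,p)$ is always symmetric about the origin (one swaps the roles of the sequences $x_i$ and $y_i$ in its definition), the identification $C_3=C_5$ forces $C_3(X,p)$ to be symmetric. For closure under addition I would invoke o-minimal curve selection: given $v_1, v_2 \in C_3(X,p)$, pick definable arcs $\gamma_1,\gamma_2 \colon [0,\varepsilon) \to X$ with $\gamma_i(0)=p$, $\gamma_1'(0^+)=v_1$ and $\gamma_2'(0^+)=-v_2$ (the second direction lies in $C_3$ by symmetry), and evaluate at a common parameter $t_i \to 0^+$:
\[
\frac{\gamma_1(t_i)-\gamma_2(t_i)}{t_i} \longrightarrow v_1+v_2,
\]
placing $v_1+v_2$ in $C_5(X,p)=C_3(X,p)$. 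Combined with positive scaling and symmetry this makes $C_3(X,p)$ a linear subspace; its dimension is $d$ by the standard o-minimal fact that $\dim C_3(X,p) = \dim_p X$.

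The second step is Grassmannian continuity of $q \mapsto C_3(X,q)$. I would establish upper semicontinuity: if $p_i \to p$ and $v_i \in C_3(X,p_i)$ with $v_i \to v$, a diagonal extraction produces $w_i \in X$ with $w_i \to p$ and $(w_i-p_i)/t_i \to v$, placing $v$ in $C_5(X,p)=C_3(X,p)$. Since $C_3(X,p_i)$ and $C_3(X,p)$ are all $d$-planes, the inclusion $\limsup C_3(X,p_i) \subset C_3(X,p)$ upgrades in the Grassmannian $\mathbb{G}^d_n$ to genuine convergence $C_3(X,p_i) \to C_3(X,p)$.

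These two facts put me exactly in the setting of Claim \ref{claim:open_map}: the hypotheses that $C_3(X,q)$ is a $d$-dimensional linear subspace for $q$ near $p$ and depends continuously on $q$ are now verified, so the argument of that claim applies verbatim to show that $\pi$ is open on a neighborhood of $p$. Theorem \ref{thm:dim_c5} then concludes that $X$ is $C^1$ smooth. I expect the key obstacle to be the linearity step: the curve-selection argument must carefully match the parameter speeds of the two arcs so that $(\gamma_1(t_i)-\gamma_2(t_i))/t_i$ converges to $v_1+v_2$ --- once this is in place, the continuity of $C_3$ and the openness of $\pi$ follow organizationally from results already established in the paper.
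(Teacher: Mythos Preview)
Your proposal is correct and follows the same route the paper sketches in one line before the corollary: deduce that $C_3(X,p)=C_5(X,p)$ is a $d$-dimensional linear subspace, invoke Claim~\ref{claim:open_map} for openness of the projection, and then apply Theorem~\ref{thm:dim_c5}. You supply the details the paper omits---the symmetry-plus-curve-selection argument for linearity and the diagonal argument for continuity of $q\mapsto C_3(X,q)$---and your caution about matching arc parametrizations is well placed but easily handled by the standard o-minimal reparametrization of a definable arc.
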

\begin{proof}
Assume $C_5(X,p)=C_3(X,p)$ for any $p\in X$.

It is easy to prove that $C_5(X,p)=C_3(X,p)$ implies that $C_5(X,p)=C_3(X,p)$ is a $d$-dimensional linear subspace.

Fixed $p\in X$, let $\pi\colon \R^n\to \R^d$ be a linear projection such that $\pi^{-1}(0)\cap C_5(X,p)={0}$.
By Proposition \ref{prop:dim_c5}, there is an open set $U$ of $X$ containing $p$ such that $U$ is, in a neighborhood of $p$, the graph of a Lipschitz function defined on $W=\pi(U)$. We claim that $W$ contains an open neighborhood of $\pi(p)\in \R^d$. Indeed, if this is not true, by Proposition V.11 in \cite{Parusinski:2023}, there exists a point $q \in \partial W$ and a neighbourhood $V$ of $q$ such that $V\cap W$ is a $C^1$ manifold with boundary. In particular, $C_3(W,q)$ is a half-space. Since $\phi:=\pi|_U\colon U\to W$ is a definable bi-Lipschitz homeomorphism, by Theorem \ref{inv_cones}, $C_3(W,q)$ and $C(X,\phi^{-1}(q))$ are bi-Lipschitz homeomorphic, which contradicts the facts that $C_3(W,q)$ is a half-space and $C(X,\phi^{-1}(q))$ is a $d$-dimensional linear subspace.

Therefore, $W$ contains an open neighborhood of $\pi(p)\in \R^d$, and thus $X$ is Lipschitz regular at $p$. By Theorem \ref{thm:dim_c5_general}, $X$ is $C^1$ smooth.

The other implication is obvious.
\end{proof}

\subsubsection{Examples}
Let us present and analyse some examples in order to show the sharpness of Theorem \ref{thm:dim_c5_general}.

We cannot drop the condition that $X$ is a topological manifold.
\begin{example}
Let $X=\{(x,y)\in \R^2; y^2-x^3\geq 0\}$. Note that $X$ is not $C^1$ smooth, and we have that $C_5(X,p)=\R^2$ for all $p\in X$.
\end{example}
The next example shows that requiring $C_5(X,p)$ to be a linear subspace only at $p \in X$ does not imply $C^1$ smoothness at $p$.

\begin{example}\label{example:c5_plane_not_smooth}
 Let $X=\{(x,y,z)\in \R^3;z=-y^2(|x|-1)\}$. We have that for each $p\in X$, $X$ is around $p$ the graph of a Lipschitz function. Moreover, $C_5(X,0)$ is the plane $z=0$, and $X$ is not $C^1$ smooth at $0$.
\end{example}

From the proof of Corollary \ref{cor:c3_c5}, it is natural to ask whether $C_5(X,p)$ is always a linear subspace. However, Example \ref{Graf_mod} below shows that it is not.
Even when adding the condition that $X$ is algebraic (hence analytic), the result still fails:

\begin{example}
    Let $X=\{(x,y,z) \in \mathbb{R}^3;x^2y^2+z^2y^2+x^2z^2=0\}$, we have that $C_5(X,0)=\{(x,y,z) \in \mathbb{R}^3;xyz=0\}$, i.e., it is not a vector space. 
\end{example}

Furthermore, even when adding the hypotheses that $X$ is irreducible and a topological manifold, the statement remains false:

\begin{example}
    Consider the set $X=\{(x,y,z)\in \mathbb{R}^3; x^3+y^3-z^3=0\}$; note that $X$ is the graph of a Lipschitz function over the plane $P=\{(x,y,z)\in \mathbb{R}^3;\}$. We will show that $\eta=(1,1,-1) \notin C_5(X,p)$. First, observe that for $p=(p_1,p_2,p_3) \in P$, and with $\pi_{\eta}:X\cap U \rightarrow P$, it follows that $\pi^{-1}(p)\cap X$. We necessarily have that the point satisfies the equation 
    $$(p_1+t)^3+(p_2+t)^3=(p_3-t)^3,$$
    thus obtaining a polynomial of the form $t^3+at+b=0$, where $a>0$. Since its derivative is positive, we have a unique solution, hence $\pi:X \cap U \rightarrow P$ is injective. Now observe that there exists $\epsilon>0$ such that if we project in the direction of $\nu=(1+r_1,1+r_2,-1+r_3)$, $\pi_{\nu}$, where $\|(r_1,r_2,r_3)\|<\epsilon$, we again obtain a polynomial whose coefficients are sufficiently close to those of $t^3+at+b=0$. By the continuity of roots, $\pi_{\nu}^{-1}(p)\cap X$ also has only one real solution, hence $\pi_{\nu}$ is also locally injective. Therefore, there does not exist a secant line passing through two points of $X\cap U$ that approaches $\eta$, so $\eta \notin C_5(X,0)$.     
\end{example}

\section{Smoothness of real analytic sets}\label{sec:reg_analytic}

\begin{theorem}\label{thm:reg_analytic_sets}
Let $X\subset \R^n$ be a $d$-dimensional real analytic set. Then the following items are equivalent:
\begin{enumerate}
    \item $X$ is LNE at any $p\in X$, and $C_4(X,p)$ is a $d$-dimensional linear subspace for all $p\in X$;
    \item $C_5(X,p)$ is a $d$-dimensional subset for all $p\in X$;
    \item $X$ is $C^1$ smooth.
\end{enumerate}
\end{theorem}

\begin{proof}
It is clear that $(3) \Rightarrow (1)$ and $(3) \Rightarrow (2)$.

Now we will show that $(1) \Rightarrow (3)$. So, assume that $X$ is LNE at any $p\in X$, and $C_4(X,p)$ is a $d$-dimensional linear subspace for all $p\in X$. By Theorem \ref{thm:main_thm_one} and Claim \ref{claim:cont_c4}, it is enough to show that $C_3(X,p) = C_4(X,p)$ for all $p \in X$. We may assume that \( p = 0 \), and choose a system of linear coordinates \( (x,y) \) in \( \mathbb{R}^n \) such that \( C_4(X,0) = \{(x,y) \in \mathbb{R}^d\times \mathbb{R}^{n-d} \mid y = 0\} \). Let \( \pi : \mathbb{R}^n \to C_4(X,0) \cong \mathbb{R}^d \) be the orthogonal projection. Since \( X \) is a real analytic set, the multiplicity modulo 2 is well defined. Let \( U \) be a sufficiently small neighborhood of 0 in \( X \) such that there exists a subanalytic set \( \sigma \subset C_4(X,0) \) satisfying $\dim \sigma <d=\dim P$ and \( \#(\pi^{-1}(v) \cap U) \mod 2 \) is constant for all \( v \in C_4(X,0) \setminus \sigma \) sufficiently close to the origin.

Suppose that $C_3(X,p) \subsetneq C_4(X,p)$. Since $C_3(X,p)$ is a closed set, \( W := C_4(X,p) \setminus C_3(X,p) \) is a non-empty open set. For \( v \in W \) and for a sufficiently small neighborhood \( U \) of 0 in \( X \), we have \( \pi^{-1}(tv) \cap U = \emptyset \) for all sufficiently small \( t > 0 \). This shows that the multiplicity of \( X \) at the origin is $0 \mod 2$.
\begin{claim}\label{claim:open_map_2}
    $\pi(U\setminus Sing(X))$ is an open subset of $C_4(X,0)$. 
\end{claim}
\begin{proof}[Proof of Claim \ref{claim:open_map_2}]
Since \( C_4(X,q) \) varies continuously with \( q \), shrinking \( U \) if necessary, we may assume that \( \delta(C_4(X,0), C_4(X,q)) < 1/3 \) for all \( q \in U \). In particular, for each \( q \in U \), the restriction of \( \pi \) to \( C_4(X,q) \) is a linear isomorphism. Thus, \( \pi(U \setminus \text{Sing}(X)) \) is an open subset of \( C_4(X,0) \).    
\end{proof}

 We may assume that \( U = X \cap B^n(0, r) \) for some \( r > 0 \). Let \( B = \pi(U) \), \( B' = B \setminus \pi(\text{Sing}(X)) \), \( U' = \pi^{-1}(B') \cap U \), \( N = \sup_{x \in B'} \#\pi^{-1}_U(x) \), and \( S = \{x \in B' \mid \#\pi^{-1}_U(x) = N\} \). By shrinking r, if necessary, we assume that $0 \in \overline{S}$ and $\pi^{-1}(0)\cap U=\{0\}$. Then \( S \) is an open set, \( \pi^{-1}_U(S) \) has exactly \( N \) connected components, say \( X_1, \ldots, X_N \), and for each \( i \), \( X_i \) is the graph of a \( C^1 \) function \( f_i: S \to \mathbb{R}^{n-d} \). 

Given $z \in S$, the distance from $z$ to $B' \setminus S$ is attained at some $z' \in B' \setminus S$, hence the ball $B_r(z) \subset S$, where $r = \text{dist}(z, B' \setminus S)$. By the assumptions on the tangent cones, each \( f_i \) also has bounded derivative. Let \( \lambda \geq 1 \) such that \( \|Df_i\| \leq \lambda \) on \( S \), for all \( i \in \{1, \ldots, N\} \). Therefore, the functions $f_i:B_r(z) \rightarrow \mathbb{R}^{n-d}$ are lipschitz, so there exists a Lipschitz extension $\tilde{f}:\overline{B_r(z)} \rightarrow \mathbb{R}^{n-d}$. 

Since the multiplicity (mod 2) at the origin is zero, we must have $N \ge 2$, that is, there exist two functions $f_1$ and $f_2$ such that $f_1(z') = f_2(z')$. Since the graph $G_i$ of each $f_i$ is bi-Lipschitz to the closed ball, it follows from Theorem~\ref{inv_cones} that the cone $C_3(G_i,(z',f_i(z')))$ is bi-Lipschitz homeomorphic to the half-space $H = C_3(B_r(z))$. Moreover, 
\[
C_3(G_i,(z',f_i(z'))) = \{(v,D_{+}(f_i)_xv) \mid v \in H\}.
\]
But since $C_4(X,p)$ is a plane, we have $C_4(X,p) = \{(v, L(v)) \mid v \in \mathbb{R}^d\}$, for some linear map $L$, hence $L(v) = D_{+}(f_i)_x v$. 

Now, let $\alpha$ be the line segment joining $z$ to $z'$. Then both $(\alpha(t),f_1(\alpha(t)))$ and $(\alpha(t),f_2(\alpha(t)))$ are tangent, i.e. $\frac{\|(\alpha(t),f_1(\alpha(t)))-(\alpha(t),f_2(\alpha(t)))\|}{t} \to 0$. On the other hand, we have 
\[
d_X((\alpha(t),f_1(\alpha(t))),(\alpha(t),f_2(\alpha(t)))) \ge 2\|\alpha(t)\|,
\]
and we may reparametrize so that $\|\alpha(t)\| = t$, which contradicts the fact that $X$ is LNE.

Now, we are going to prove $(2) \Rightarrow (3)$. So, given $p\in X$, we assume that $C_5(X,p)$ is a $d$-dimensional set. Then there exists a $d$-dimensional linear subspace $P\subset \R^n$ such that the orthogonal projection $\pi\colon \mathbb{R}^n \rightarrow  P$ satisfies $\pi^{-1}(0)\cap C_5(X,p)=\{0\}$.

By the same argument used in Proposition~\ref{prop:dim_c5}, we have that $\pi|_{X \cap U}:X \cap U \rightarrow \pi(X\cap U)$ is bi-Lipschitz, for some open neighborhood $U$ of $p$. 

If we show that $\pi(X\cap U)$ contains an open set, then $X\cap U$, possibly after shrinking the open set, is Lipschitz regular and, in particular, $X$ is a topological manifold. Thus, the result follows from Theorem \ref{thm:dim_c5_general}.

We may assume that \( U \) is taken sufficiently small such that there exists a subanalytic set \( \sigma \subset P \) satisfying $\dim \sigma <d=\dim P$ and \( \#(\pi^{-1}(v) \cap X\cap U) \mod 2 \) is constant for all \( v \in P \setminus \sigma \) sufficiently close to the origin.

Since $\pi:X \cap U \rightarrow \pi(X\cap U)$ is bi-Lipschitz, there exists a Lipschitz extension $\tilde{\pi}:\overline{X\cap U} \rightarrow \overline{\pi(X\cap U)} = \pi(\overline{X\cap U})$. But $X$ is analytic and, in particular, closed in $U$. Hence, $\overline{X\cap U} = X \cap \overline{U}$, and therefore $\pi(X \cap \overline{U}) \subset P$ is closed and contains the origin. 

Thus, $P \setminus \pi(X \cap \overline{U})$ is an open subset of $P$, and so $\dim(P \setminus \pi(X \cap \overline{U})) = \dim(P) > \dim(\sigma)$. 

Now observe that $0 \notin \overline{P \setminus \pi(X \cap \overline{U})}$. Indeed, take a sequence $(x_j)_j \subset (P \setminus \pi(X \cap \overline{U})) \setminus \sigma$ with $x_j \to 0$. Then $\pi^{-1}(x_j) \cap X = \emptyset$, so $m(X,0) = 0 \pmod{2}$, which contradicts the fact that $\pi:X \cap U \rightarrow \pi(X\cap U)$ is injective. 

Therefore, $0 \notin \overline{P \setminus \pi(X \cap \overline{U})}$, so $\pi(X \cap \overline{U})$ contains a neighborhood of the origin.

\end{proof}

\subsection{Examples}

We cannot drop the condition that $X$ is an analytic set.
\begin{example}
Let $X=\{(x,y)\in \R^2; y^2-x^3\leq 0\}$. Note that $X$ is not $C^1$ smooth, and we have that $C_4(X,p)=C_5(X,p)=\R^2$ for all $p\in X$. Moreover, $X$ is LNE at any $p\in X$. 
\end{example}

Note also that there is an analytic set $X$ that is not $C^1$ smooth and is LNE at any $p\in X$, and $C_3(X,p)=C_4(X,p)$ for all $p\in X$.

\begin{example}
For $X=\{(x,y)\in \R^2;xy=0\}$, we have that $C_3(X,p)=C_4(X,p)$ for all $p\in X$. But neither $C_3(X,0)$ nor $C_4(X,0)$  is a linear subspace.
\end{example}

\subsection{Parametrised analytic sets and the answer for Question \ref{question:parametrized}}
We finish this section by proving that Question \ref{question:parametrized} has a negative answer.
\begin{proposition}\label{prop:parametrized_fails}
Let $F\colon \R^2\to \R^3$ be the mapping given by $F(x,y)=(x,y^3,(x+y)^3)$. Then $X=Im(F)$ is an algebraic set, Lipschitz regular at any $p\in X$, $C(X,p)$ is a plane for any $p\in X$, but $X$ is not smooth at $0$.
\end{proposition}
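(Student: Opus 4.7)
My plan is to verify each of the four assertions separately, all directly from the parametrization $F(x,y)=(x,y^{3},(x+y)^{3})$.

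First, for algebraicity, the identity $(x+y)^{3}-x^{3}-y^{3}=3xy(x+y)$ gives, with $(u,v,w)=F(x,y)$, the relation $w-u^{3}-v=3xy(x+y)$, and cubing produces the polynomial identity
\[
(w-u^{3}-v)^{3}=27\,u^{3}vw,
\]
which the image $X$ satisfies. Thus $X$ is contained in the real algebraic hypersurface cut out by this equation; a check via the factorization $a^{3}-b^{3}-c^{3}-3abc=(a-b-c)(a^{2}+b^{2}+c^{2}+ab-bc+ac)$ applied to $a=w^{1/3}$, $b=u$, $c=v^{1/3}$ confirms the algebraic description.

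Second, smoothness of $X$ away from $0$ and the tangent cone at $0$. The Jacobian $DF(x,y)$ has $2\times 2$ minors $3y^{2}$, $3(x+y)^{2}$ and $-9y^{2}(x+y)^{2}$, so $F$ is an immersion away from $(0,0)$; consequently $X\setminus\{0\}$ is a smooth analytic $2$-manifold, Lipschitz regular and with $C_3(X,p)=T_{p}X$ planar at each $p\ne 0$. For $C_3(X,0)$, the scaling $(x,y)=(s\alpha,s^{1/3}\gamma)$ yields $F(x,y)/s\to(\alpha,\gamma^{3},\gamma^{3})$ as $s\to 0^{+}$, realizing every vector of the plane $T:=\{v=w\}$ as a tangent direction; conversely the estimate $|v-w|=|x|\,|x^{2}+3xy+3y^{2}|=o(\|F(x,y)\|)$ (using $|x|\le\|F(x,y)\|$) forces $C_3(X,0)\subset T$, so $C_3(X,0)=T$.

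The heart of the argument is Lipschitz regularity at $0$. The plan is to realize $X$, near $0$, as the graph of a Lipschitz function over $T$. Parametrize $T$ by $(u,t)\mapsto(u,t,t)$ with unit normal $\nu=(0,1,-1)/\sqrt{2}$ and write each point of $X$ as $(u,t,t)+h\,\nu$, with $t=(v+w)/2$ and $h=(v-w)/\sqrt{2}$. Substituting $v=y^{3}$, $w=(u+y)^{3}$ gives $y^{3}+(u+y)^{3}=2t$; strict monotonicity in $y$ determines $y=y(u,t)$ continuously, so $h(u,t)$ is well-defined and continuous on $\R^{2}$. Implicit differentiation on the smooth locus $\{y\ne 0\}\cap\{u+y\ne 0\}$ gives
\[
h_{u}=-\frac{3\sqrt{2}\,y^{2}(u+y)^{2}}{y^{2}+(u+y)^{2}},\qquad h_{t}=\sqrt{2}\,\frac{y^{2}-(u+y)^{2}}{y^{2}+(u+y)^{2}},
\]
both of which are uniformly bounded on a neighbourhood of the origin (the first by $3\sqrt{2}\min(y^{2},(u+y)^{2})$, the second by $\sqrt{2}$). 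The exceptional set is the union of the two smooth curves $t=\pm u^{3}/2$, so any two points of $\R^{2}$ can be joined by a line segment meeting it transversally in finitely many points; integrating $dh$ piecewise then yields $|h(p)-h(q)|\le L\|p-q\|$ locally. Hence the orthogonal projection $X\to T$ is a bi-Lipschitz homeomorphism onto an open set of $T\cong\R^{2}$, which together with the smoothness of $X\setminus\{0\}$ proves Lipschitz regularity at every point of $X$.

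Finally, to rule out $C^{1}$ smoothness at $0$ I would invoke Theorem~\ref{thm:main_thm}: smoothness would force $C_4(X,0)=C_3(X,0)=T$. But normalizing the second column of $DF(x,y)$ gives the unit vector $(0,y^{2},(x+y)^{2})/\sqrt{y^{4}+(x+y)^{4}}$, whose limits as $(x,y)\to 0$ along $y=0$ and along $x+y=0$ are $(0,0,1)$ and $(0,1,0)$ respectively; by Lemma~\ref{caracterizacao do C4} this places the two distinct planes $\{v=0\}$ and $\{w=0\}$ inside $C_4(X,0)$, contradicting $C_4(X,0)=T$. The main obstacle throughout is this Lipschitz-regularity step: the naive graph representations of $X$ over the coordinate planes $(u,v)$ or $(v,w)$ all have non-Lipschitz height functions involving cube roots, so one is forced to project onto the tangent plane $T$ itself, and it is only with this choice that implicit differentiation together with the elementary bound $\frac{ab}{a+b}\le\min(a,b)$ produces the uniform partial-derivative bounds.
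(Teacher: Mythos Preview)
Your argument is correct, and for the crucial Lipschitz regularity step it is in fact \emph{better} than the paper's. The paper establishes Lipschitz regularity at $0$ by projecting $X$ along $\eta=(1,1,0)$ onto the plane $P=\{x+y=0\}$ and asserting that, since projections in nearby directions are also bijective, $\pi_\eta|_X$ must be bi-Lipschitz. You instead project orthogonally onto the tangent plane $T=C_3(X,0)=\{v=w\}$, solve $y^3+(u+y)^3=2t$ implicitly, and bound the partials of the height $h$ directly via $\frac{ab}{a+b}\le\min(a,b)$. This is not merely a stylistic difference: the paper's projection actually fails to be bi-Lipschitz near $0$. Indeed, taking $t_1=\epsilon$, $t_2=\epsilon-\epsilon^2$ and $s_i=-\epsilon+t_i^3$ (so that both points have the same $(s-t^3)$), one computes $p_1-p_2\approx(3\epsilon^4,3\epsilon^4,\epsilon^6)$, whence $(p_1-p_2)/\|p_1-p_2\|\to(1,1,0)/\sqrt2$; thus $(1,1,0)\in C_5(X,0)$ and the projection along $(1,1,0)$ cannot satisfy a lower Lipschitz bound. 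Your choice of projecting onto $T$ avoids this obstacle precisely because your explicit estimates show that the normal direction $(0,1,-1)$ stays away from $C_5(X,0)$.

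Two small remarks on your write-up. First, the ``exceptional set'' for $h$ is only the origin, not the two curves $t=\pm u^3/2$: on those curves one still has $3y^2+3(u+y)^2=3u^2>0$, so the implicit function theorem applies and $h$ is $C^1$ on $\R^2\setminus\{0\}$; this only simplifies your integration step. Second, as in the paper, the polynomial $(w-u^3-v)^3=27u^3vw$ cuts out slightly more than $X$: the factorization $a^3+b^3+c^3-3abc=(a+b+c)\cdot\tfrac12\big((a-b)^2+(b-c)^2+(c-a)^2\big)$ with $(a,b,c)=(w^{1/3},-u,-v^{1/3})$ shows that the zero set also contains the curve $\{(u,u^3,-u^3)\}$, so the equality ${\rm Im}(F)=V(f)$ asserted in the paper (and implicitly by you) is not quite exact. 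This does not affect the remaining claims, which are local at $0$ and depend only on the parametrization.
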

\begin{proof}
Note that $Im(F)=V(f)$, where $f\colon \R^3\to \R$ is given by
$$ f(x,y,z)=(y-z)^3+x^9+3x^6y+3x^3y^2-3x^6z+21x^3yz+3x^3z^2.$$
Thus $X$ is algebraic.
 Now note that $X\setminus\{0\}$ is smooth, so it is enough to analyze only at the origin. Consider the projection $\pi:\mathbb{R}^3 \rightarrow P$ in the direction of the vector $\eta=(1,1,0)$, where $P=      \{(x,y,z) \in \mathbb{R}^3;x+y=0\}$. Notice that this is a bijection when restricted to $X$. Moreover, we can see that by projecting in a neighborhood of the direction $\eta=(1,1,0)$, we also obtain a bijective projection, and therefore the projection is a bi-Lipschitz homeomorphism.

To see that the cone is a linear subspace at every point, simply observe $C_3(X,0)=\{(x,y,z) \in \mathbb{R}^3;y=z\}$. Finally, let us show that $X$ is not $C^1$. For this, note that the normal vectors to the tangent plane at points of the form $(\frac{1}{k},0)$ are vectors of the form $(0,1,0)$, but at $(0,0)$, the normal vector is $(0,1,-1)$. Therefore, the tangent cones, which are planes, do not vary continuously, and therefore $X$ is not $C^1$ smooth.
\end{proof}

\subsection{Zeros of a harmonic function  and the answer for Question \ref{question:harmonic_sets}}
In this Subsection, we show that there are zeros of a harmonic function that are Lipschitz regular but may be non-smooth.

\begin{example}\label{exam:hamornic_set_nonsmooth}
    Let $f:\mathbb{R}^3 \rightarrow \mathbb{R}$ be the homogeneous and harmonic polynomial given by 
    $$f(x,y,z)=x^3+\frac{1}{32}z^3-3xy^2-\frac{3}{64}zx^2-\frac{3}{64}zy^2$$
    Let $X$ be the zero set of $f$. We have that $X$ is a Lipschitz manifold, but it is neither a linear subspace nor is $C^1$ smooth (at 0). If $X$ is $C^1$ smooth (at 0), then $C_3(X,0)$ would be a plane, but $C_3(X,0)=X$, since $f$ is homogeneous. 
    Now let us show that it is a Lipschitz manifold. By the Curve Classification Theorem, there exists a $C^1$ diffeomorphism $h:X\cap \mathbb{S}^2 \rightarrow \mathbb{S}^1$. Since it is of class $C^1$, it is locally Lipschitz, and being defined on a compact set, it is Lipschitz. Therefore, define $H:X\rightarrow \mathbb{R}^2$ by 
    $$
    H(x) = \begin{cases}
        \|x\|h\left(\frac{x}{\|x\|}\right), & \text{if }  x\not=0, \\
        0, & \text{if } x=0.
    \end{cases}
    $$
    Since $H$ is bi-Lipschitz, we obtain the result.
\end{example}


\begin{thebibliography}{99}


\bibitem{BigolinG:2000}
Bigolin, F. and Golo, S.N. 
{\it A historical account on characterizations of C1-manifolds in Euclidean spaces by tangent cones}.
J. Math. Anal. Appl., vol. 412 (2014), no. 1, 63-–76.

\bibitem{BigolinG:2012}
Bigolin, F. and Greco, G. H.
{\it Geometric characterizations of C1 manifolds in Euclidean spaces by tangent cones.}
J. Math. Anal. Appl., vol. 396 (2012), no. 1, 145-–163.

\bibitem{BirbrairFLS:2016}
{Birbrair, L.; Fernandes, A.; L\^e D. T. and Sampaio, J. E.}
{\it Lipschitz regular complex algebraic sets are smooth}.
Proceedings of the American Mathematical Society, vol. 144 (2016), 983--987.

\bibitem{Brieskorn:1966} 
{Brieskorn, E. V.}
{\it Examples of singular normal complex spaces which are topological manifolds}. 
Proc. Nat. Acad. Sci. U.S.A., vol. 55 (1966), 1395--1397.

\bibitem{Coste:1999} 
{Coste, M.}
{\it An introduction to O-minimal Geometry}. 
Institut de Recherche Math\'ematique de Rennes, 1999.

\bibitem{Dries:1998}
van den Dries, L.
{\it Tame topology and o-minimal structures}. London Mathematical Society,
Lecture Note Series, vol. 248, Cambridge University Press, Cambridge, 1998.

\bibitem{FernandesS:2019}
{Fernandes, A. and Sampaio, J. E.}
{\it Tangent cones of Lipschitz normally embedded sets are Lipschitz normally embedded. Appendix by Anne Pichon and Walter D. Neumann}. 
International Mathematics Research Notices, vol. 2019 (2019), no. 15, 4880--4897.

\bibitem{FernandesJS:2022}
{Fernandes, A.;  Jelonek, Z. and Sampaio, J. E.}
{\it On the Fukui-Kurdyka-Paunescu Conjecture.} 
Compositio Mathematica, vol. 158 (2022), no. 6, 1298-1313.

\bibitem{GhomiH:2014}
{Ghomi, M. and Howard, R.}
{\it Tangent cones and regularity of real hypersurfaces}.
Journal f$\ddot{u}$r die reine und angewandte Mathematik (Crelles Journal), vol. 697 (2014), 221--247.


\bibitem{Gluck:1968}
{Gluck, H.}
{\it Geometric characterization of differentiable manifolds in Euclidean space II}. 
Michigan Math. J., vol. 15 (1968), 33--50.


\bibitem{KurdykaGN:2018}
{Kurdyka, K.; Le Gal, O. and Nguyen X. V. N.}
{\it  Tangent cones and $C^1$ regularity of definable sets}.
J. Math. Anal. Appl., vol. 457 (2018), 978--990.


\bibitem{Milnor:1968} 
{Milnor, J. W.}
{\it Singular points of complex hypersurfaces}.
Princeton: Princeton University Press, (1968).


\bibitem{Mumford:1961} 
{Mumford, M.}
{\it The topology of normal singularities of an algebraic surface and a criterion for simplicity}.
Inst. Hautes \'Etudes Sci. Publ. Math., vol. 9 (1961), 5--22.


\bibitem{Nuno-BallesterosM:2020}
{J. Nu\~no-Ballesteros. Rodrigo Mendes}.
{\it Topological Classification and Finite Determinacy of Knotted Maps}. 
Michigan Math. J., vol. 69 (2020), no. 4, 831--848.

\bibitem{Parusinski:2023}
{Parusi\'nski, A.} 
{\it Introduction to semialgebraic, subanalytic and
o-minimal sets and their application in analysis
Trento, 2023}. 
\url{https://math.univ-cotedazur.fr/~parus/publis/trento.pdf}


\bibitem{Sampaio:2015}
{Sampaio, J. E.} 
{\it Regularidade lipschitz, invari\^ancia da multiplicidade e a geometria dos cones tangentes
de conjuntos anal\'itico}. 
Ph.D. thesis, Universidade Federal Do Cear\'a (2015). \url{http://www.repositorio.ufc.br/bitstream/riufc/12545/1/2015_tese_jesampaio.pdf}


\bibitem{Sampaio:2016}
{Sampaio, J. E.}
{\it Bi-Lipschitz homeomorphic subanalytic sets have bi-Lipschitz homeomorphic tangent cones}.
Selecta Mathematica: New Series, vol. 22 (2016), no. 2, 553--559.


\bibitem{Sampaio:2021}
{Sampaio, J. E.}
{\it Multiplicity, regularity and Lipschitz Geometry of real analytic hypersurfaces}.  
Israel Journal of Mathematics,  vol. 246 (2021), no. 1, 371–394.

\bibitem{Sampaio:2021b}
{Sampaio, J. E.}
{\it  Globally subanalytic CMC surfaces in R3 with singularities}. 
Proceedings of the Royal Society of Edinburgh: Section A Mathematics, vol. 151 (2021), no. 1, 407-424.


\bibitem{Sampaio:2022}
{Sampaio, J. E.}
{\it Differential invariance of the multiplicity of real and complex analytic sets.} 
Publicacions Matem\`atiques, vol. 66 (2022), 355–368.

\bibitem{Sampaio:2025a}
{Sampaio, J. E.}
{\it Log-Lipschitz regularity and H\"older regularity imply smoothness for complex analytic sets}.
J. Eur. Math. Soc., vol. online (2025), 23 pp.

\bibitem{Sampaio:2025b}
{Sampaio, J. E.}
{\it Real and bi-Lipschitz versions of the Theorem of Nobile}.
Preprint (2025), arXiv:2502.20631v2 [math.AG].

\bibitem{SampaioS:2022}
{Sampaio, J. E. and Silva, E. C. da}
{\it On bi-Lipschitz invariance and the uniqueness of tangent cones}.
Journal of Singularities, vol. 25 (2022), 393-402.

\bibitem{Tierno:2000}
{Tierno, G.}
{\it The paratingent space and a characterization of C1-maps defined on arbitrary sets}. 
J. Nonlinear Convex Anal., vol 1 (2000), no. 2, 129-–154.
\end{thebibliography}
\end{document}